\mathchardef\mhyphen="2D
\def\on{\operatorname}
\providecommand{\leftsquigarrow}{%
  \mathrel{\mathpalette\reflect@squig\relax}%
}
\newcommand{\reflect@squig}[2]{%
  \reflectbox{$\m@th#1\rightsquigarrow$}%
}
\definecolor{ao}{rgb}{0.0, 0.5, 0.0}
\newtheorem{theorem}{Theorem}[section]
\newtheorem{lemma}[theorem]{Lemma}
\newtheorem{proposition}[theorem]{Proposition}
\newtheorem{corollary}[theorem]{Corollary}
\theoremstyle{definition}
\newtheorem{construction}[theorem]{Construction}
\newtheorem{definition}[theorem]{Definition}
\newtheorem{remark}[theorem]{Remark}
\newtheorem{example}[theorem]{Example}
\newcommand\noloc{%
  \nobreak
  \mspace{6mu plus 1mu}
  {:}
  \nonscript\mkern-\thinmuskip
  \mathpunct{}
  \mspace{2mu}
}
\newcommand{\A}{\mathcal{A}}
\newcommand{\B}{\mathcal{B}}
\newcommand{\C}{\mathcal{C}}
\newcommand{\D}{\mathcal{D}}
\newcommand{\op}{\on{op}}
\title{$\infty$-categorical group quotients via skew group algebras}
\author{Merlin Christ}
\date{\today}
\begin{document}

\maketitle

\abstract{We relate group quotients of dg-categories and linear stable $\infty$-categories. Given a group acting on a dg-algebra, we prove that the skew group dg-algebra is Morita equivalent to the dg-categorical homotopy group quotient. We also treat the cases of group actions on dg-categories, with corresponding skew group dg-categories, and of orbit dg-categories. Finally, we describe a version of the skew group algebra in the setting of ring spectra and relate it with $\infty$-categorical group quotients.
}

\tableofcontents

\section{Introduction}

\subsubsection*{Skew group algebras and skew group dg-categories}

The skew group algebra is a classical construction in the representation theory of finite dimensional algebras \cite{RR85} and also appears in the construction of non-commutative resolutions of singularities \cite{KV00,VdB22}. As input for its construction serves an algebra $A$ over a field $k$ and a group $G$ acting on $A$ by automorphisms. The skew group algebra $AG$ has as underlying vector space the coproduct $A^{\amalg G}$, with multiplication determined on additive generators by $(g,a)\cdot (h,b)=(gh, a\cdot (g.b))$. An analogue of this construction is also known in the case that $A$ is a dg-algebra, dg-category or even an $A_\infty$-category, see for instance \cite{LM20,OZ22,AP24}.

The representation theory of the skew group algebra $AG$ is typically better behaved than that of the algebra of fixed points $A^G$, which is why $AG$ has become a standard construction. Indeed, under some finiteness assumptions, the module category $\on{mod}(AG)$ is equivalent to the category $\on{mod}(A)^G$ of $G$-equivariant $A$-modules, see \cite{Dem11}. 

By passing to the setting of dg-categories, we can clarify the universal property of the skew group algebra:

\begin{proposition}[\Cref{prop:skewgroupcatiscolim} and \Cref{rem:htpycolim}]\label{prop:dgcolimBG}
Fix a base field $k$. Let $A$ be a dg-category with a strict action by a group $G$. We consider the $G$-action as a functor 
\[ \rho\colon BG\to \on{dgCat}_k\,,\]
where $BG$ is the classifying space of $G$. The homotopy colimit\footnote{With respect to the Morita model structure on the category $\on{dgCat}_k$ of dg-categories.} of $\rho$ is Morita equivalent to the skew group dg-category, denoted $A\ast G$. 

If $A$ has a single object, we can identify it with a dg-algebra and in this case $A\ast G=AG$ describes a dg-version of the skew group algebra.
\end{proposition}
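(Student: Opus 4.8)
The plan is to identify the homotopy colimit of $\rho\colon BG\to \on{dgCat}_k$ with the skew group dg-category $A\ast G$ up to Morita equivalence. My first step would be to fix a concrete model for the homotopy colimit. Since $BG$ is the classifying space of a discrete group, I would use the bar-type resolution: the homotopy colimit over $BG$ can be computed as the realization (in the Morita model structure) of the simplicial diagram whose $n$-simplices are $\coprod_{g_1,\dots,g_n\in G} A$, with face maps given by applying the group action $\rho(g_i)$ and by multiplication in $G$. Equivalently, because $BG$ has a single object, the homotopy colimit is computed by a two-sided bar construction $B(*,G,A)$ in dg-categories. My goal is then to produce a dg-category that models this realization explicitly and compare it to $A\ast G$.

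\medskip

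Next I would construct the comparison functor. There is a canonical dg-functor $A\to A\ast G$ including $A$ as the identity-group component, and the $G$-action on $A$ becomes \emph{inner} after passing to $A\ast G$: for each $g\in G$ the element $g$ provides an invertible morphism conjugation by which realizes $\rho(g)$. This is exactly the data of a cocone under the diagram $\rho$, hence by the universal property of the homotopy colimit it induces a functor
\[
\Phi\colon \operatorname{hocolim}_{BG}\rho \longrightarrow A\ast G\,.
\]
Concretely, on the bar model $\Phi$ sends the $n$-simplex labelled by $(g_1,\dots,g_n)$ and a morphism of $A$ to the corresponding twisted product in $A\ast G$, matching the multiplication rule $(g_1,a_1)\cdot(g_2,a_2)=(g_1g_2,a_1\cdot(g_1.a_2))$ that defines the skew group structure.

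\medskip

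The core of the argument is to show $\Phi$ is a Morita equivalence, i.e.\ that it induces an equivalence on the associated categories of modules (or perfect derived categories). The most efficient route is to verify it on Hochschild-type data: I would compute the mapping complexes of the homotopy colimit using the standard formula that expresses hom-complexes of a bar construction as a direct sum indexed by the group, namely $\bigoplus_{g\in G}\operatorname{Hom}_A(x, g.y)$, and observe that this agrees term-by-term with the hom-complexes of $A\ast G$, whose objects are the same as those of $A$ and whose morphisms from $x$ to $y$ are precisely $\bigoplus_{g\in G}\operatorname{Hom}_A(x,g.y)$ with the twisted composition. Identifying these complexes and checking compatibility of the differentials and compositions shows that $\Phi$ is fully faithful; since every object of $A\ast G$ lies in the image, it is essentially surjective, hence a Morita (indeed quasi-)equivalence. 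The single-object specialization then gives $A\ast G=AG$ immediately, since with one object the hom-complex $\bigoplus_{g\in G}A$ with twisted multiplication is exactly the dg skew group algebra.

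\medskip

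The main obstacle I anticipate is the bookkeeping in the bar construction: making the homotopy colimit in the Morita model structure genuinely computable requires either cofibrant-replacing the diagram $\rho$ or working with an explicit projectively cofibrant bar resolution, and then checking that the higher simplices contribute nothing new beyond the $g$-indexed direct sum (i.e.\ that the realization degenerates, up to quasi-isomorphism, to the expected hom-complexes). Controlling the differential coming from the simplicial face maps and verifying that the twisted composition in $A\ast G$ is precisely what the bar differential produces is where the real care is needed; once the hom-complexes are matched, fully faithfulness and the Morita statement follow formally.
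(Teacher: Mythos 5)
Your overall architecture --- build a cocone under $\rho$ with tip $A\ast G$, then show the induced functor is an equivalence by matching hom-complexes $\bigoplus_{g\in G}\on{Hom}_A(x,g.y)$ on both sides --- is the same skeleton as the paper's, but the two steps you treat as formal are exactly where the substance lies, and as written both are gaps. First, the cocone: inner trivialization of the action gives, for each $g\in G$, a natural isomorphism $F_A\circ\rho(g)\cong F_A$ (explicitly $(g,\on{id}_{g.X})\colon \widetilde{g.X}\to\tilde X$), and these do satisfy the strict cocycle identities; but a cocone under a $BG$-shaped diagram in the $\infty$-category $\on{dgCat}_k[M^{-1}]$ is homotopy-coherent data over \emph{all} simplices of $BG$, and a dg-functor together with natural isomorphisms is 2-categorical pseudo-cocone data that does not, by itself, invoke any universal property of the homotopy colimit. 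It must first be rectified to a strict (or coherent) cocone. The paper's \Cref{lem:freeaction} and \Cref{lem:trivialG-action} exist precisely for this: one replaces $A$ by an equivalent dg-category $A'$ whose $G$-action is free on objects, after which the induced action on the reduced skew group category $(A'\ast G)^{\on{red}}$ is \emph{trivial} and $F_{A'}$ is strictly equivariant, so the cocone commutes on the nose and the comparison functor $\zeta$ exists. Your proposal should either perform this rectification or supply an argument that your pseudonatural data induces an $\infty$-categorical cocone.

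Second, the hom-complexes: you appeal to a ``standard formula'' expressing the mapping complexes of the bar realization as $\bigoplus_{g\in G}\on{Hom}_A(x,g.y)$. In the Morita model structure no such off-the-shelf formula is available --- realizations of simplicial diagrams of dg-categories involve localizations and do not compute hom-complexes levelwise (already homotopy pushouts of dg-categories change morphism complexes drastically). The formula is correct, but establishing it is essentially the theorem itself. The paper proves it by two rigorous mechanisms, either of which you could adopt: $\infty$-categorically, the cone functor $F\colon\D(A)\to\D(A)_G$ is monadic with monad $\coprod_{g\in G}\rho(g)$ (\Cref{lem:monad}, proved via the categorical group algebra $\on{Mod}_k^{\amalg G}$ and the relative tensor product description of the colimit), which gives $\on{Mor}_{\D(A)_G}(FX,FY)\simeq\coprod_{g}\on{Map}_A(X,g.Y)$; alternatively, model-categorically, when $A$ is cofibrant and the action is free on objects, the diagram $\rho_A$ is projectively cofibrant (\Cref{lem:cofibrant}) and its \emph{strict} colimit is $(A'\ast G)^{\on{red}}$ (\Cref{prop:homotopycolim}), so no bar bookkeeping is needed at all. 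Finally, note that matching the hom-complexes must be done summand-by-summand: the identity summand is handled by commutativity of the cocone triangle, while the summands indexed by $g\neq e$ require observing that the equivalences $F(X)\simeq F(g.X)$ map to the corresponding equivalences $\tilde X\simeq\widetilde{g.X}$ and, together with morphisms from $A$, generate all morphisms on both sides --- this is the final step in the paper's proof of \Cref{prop:skewgroupcatiscolim} and is absent from your outline.
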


The above homotopy colimit is also called the homotopy group quotient of $A$ by $G$. By using homotopy colimits, instead of homotopy limits, we can avoid any finiteness assumptions on $G$. Note also that we impose no assumptions on the characteristic of the ground field $k$. 

In the case $G=\mathbb{Z}$, we will also treat non-strict actions and show that the orbit dg-category, recently constructed for non-strict $\mathbb{Z}$-actions in \cite{FKQ24}, describes the homotopy colimit in \Cref{prop:orbitdgcat}. 

To prove \Cref{prop:dgcolimBG}, we construct a strict cocone under a diagram equivalent to $\rho\colon BG\to \on{dgCat}_k$, whose tip is Morita equivalent to $AG$. Passing to derived $\infty$-categories, we obtain an induced functor from the $\infty$-categorical colimit $\D(A)_G$ to the derived $\infty$-category of $AG$. We use the $\infty$-categorical colimit $\D(A)_G$ to describe the homotopy colimit, via the equivalence of dg-categories up to Morita equivalence with compactly generated $k$-linear stable $\infty$-categories. This functor $\D(A)_G\to \D(AG)$ is an equivalence, which is shown by comparing the derived endomorphisms of generators.

We also give a separate proof of \Cref{prop:dgcolimBG} under some cofibrancy assumptions, by exhibiting the skew-group algebra as the strict dg-categorical colimit of a cofibrant diagram, see \Cref{prop:homotopycolim}.\\

The results of this paper unify different perspectives on categorical group quotients, leading to new results about these. For instance, exhibiting $\infty$-categorical universal properties of the dg-categorical constructions makes these accessible to powerful $\infty$-categorical arguments. An immediate consequence of the above results is that the passage to skew group dg-algebras and skew group dg-categories commutes with homotopy colimits of dg categories, as colimits commute with colimits. Reversing the perspective, the results of this note also yield concrete models for the abstract $\infty$-categorical constructions. We illustrate this by describing periodic derived $\infty$-categories and periodic topological Fukaya categories of surfaces in terms of orbit categories in \Cref{subsec:periodicex}. We also obtain new results on group quotients and module $1$-categories, see below.

\subsubsection*{Group actions on linear stable $\infty$-categories}

Let $G$ be a group. The classifying space $BG$ is the category with a single object $\ast$ with endomorphisms $G$. We work over a base $\mathbb{E}_\infty$-ring spectrum $R$ and denote $\on{Mod}_R$ denote the symmetric monoidal stable $\infty$-category of $R$-module spectra. We denote by 
\[ \on{LinCat}_{\on{Mod}_R}=\on{LMod}_{\on{Mod}_R}(\mathcal{P}r^L)\]
the $\infty$-category of $\on{Mod}_R$-linear presentable $\infty$-categories (these are automatically stable). 

The standard way to define an action of $G$ on a $\on{Mod}_R$-linear $\infty$-category $\C$ is as a functor $BG\to \on{LinCat}_{\on{Mod}_R}$, mapping $\ast$ to $\C$. Central to our treatment of group actions will be the following equivalent way to encode such $G$-actions, see for instance \cite{CCRY22,BCSY23}: forming the coproduct of $\on{Mod}_R$ over $G$ yields a monoidal $\infty$-category $\on{Mod}_R^{\amalg G}\in \on{LinCat}_{\on{Mod}_R}$, called the categorical group algebra; a $G$-action on $\C$ can equivalently be expressed as a left $\on{Mod}_R^{\amalg G}$-action on $\C$.

This perspective allows to apply the powerful and well developed framework of $\infty$-categorical algebra objects and modules of \cite{HA} to the study of group actions and group quotients. For instance, the $\infty$-categorical colimit over $BG$ amounts in terms of left actions to tensoring with the $\on{Mod}_R$-$\on{Mod}_R^{\amalg G}$-bimodule $\on{Mod}_R$. Using this perspective, desired basic properties of group quotients readily follow.

\subsubsection*{Skew group ring spectra}

In \Cref{sec:skewgroupalg}, we generalize the construction of the skew group algebra to the setting of a group $G$ acting on an $R$-linear ring spectrum $A$. The skew group algebra $AG$ is then an $R$-linear ring spectrum with underlying spectrum the coproduct $A^{\amalg G}$, and the multiplication generalizing the above. The actual construction of the ring spectrum $AG$ is based on universal constructions, see \Cref{rem:skewgroupspectrum} for a summary. \Cref{prop:dgcolimBG} then generalizes as follows:

\begin{theorem}\label{thm:colimBG}
Let $R$ be the base $\mathbb{E}_\infty$-ring spectrum and $A\in \on{Alg}(\on{Mod}_R)$ be an $R$-linear ring spectrum. Let $\rho\colon BG\to \on{Alg}(\on{Mod}_R)$ be an action of a group $G$ on $A$. The colimit of the functor between $\infty$-categories
\begin{equation}\label{eq:fun1} BG\xlongrightarrow{\rho} \on{Alg}(\on{Mod}_R)\xlongrightarrow{\on{RMod}_{(\mhyphen)}} \on{LinCat}_{\on{Mod}_R}\end{equation}
is equivalent to $\on{RMod}_{AG}$.
\end{theorem}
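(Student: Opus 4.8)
The plan is to build an explicit comparison functor out of the colimit and to prove it is an equivalence by matching compact generators. Write $\mathcal{Q}$ for the colimit of the composite \eqref{eq:fun1}, with canonical leg $j\colon\on{RMod}_A\to\mathcal{Q}$ in $\on{LinCat}_{\on{Mod}_R}$. Unwinding the module-theoretic description of $G$-actions recalled above (\cite{CCRY22,BCSY23}), the functor \eqref{eq:fun1} is the datum of a left $\on{Mod}_R^{\amalg G}$-action on $\on{RMod}_A$, and $\mathcal{Q}\simeq\on{Mod}_R\otimes_{\on{Mod}_R^{\amalg G}}\on{RMod}_A$. To obtain a comparison functor $\Phi\colon\mathcal{Q}\to\on{RMod}_{AG}$ I would invoke the universal property of the colimit: a colimit-preserving $R$-linear functor $\on{RMod}_A\to\on{RMod}_{AG}$ is the same as an $A$-$AG$-bimodule, and the relevant one is ${}_A(AG)_{AG}$ coming from the unit map $A=(e,A)\hookrightarrow AG$, i.e.\ extension of scalars $\mhyphen\otimes_A AG$. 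The units $(g,1)\in(AG)^\times$ equip this bimodule with a $G$-equivariant structure: left multiplication by $(g,1)$ realises the relation $(g,1)\cdot(e,a)=(e,\rho(g)(a))\cdot(g,1)$ and thereby identifies $AG$ with its $\rho(g)$-twist as a left $A$-module. This equivariance is precisely a cocone under \eqref{eq:fun1} and hence determines $\Phi$.

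It then remains to show $\Phi$ is an equivalence. Both sides are compactly generated $R$-linear stable $\infty$-categories: $\on{RMod}_{AG}$ is generated by the free module $AG$, and $\mathcal{Q}$ is generated under colimits by $M:=j(A)$, since $j$ preserves colimits and $A$ generates $\on{RMod}_A$. Standard properties of the group-quotient adjunction $j\dashv j^R$ (its right adjoint preserves colimits) make $M$ compact, and by construction $\Phi(M)\simeq A\otimes_A AG\simeq AG$. As $\Phi$ preserves colimits and carries the compact generator $M$ to the compact generator $AG$, the recognition theorem for module categories \cite{HA} reduces the claim to showing that $\Phi$ induces an equivalence on endomorphism algebras of the generators,
\[\on{End}_{\mathcal{Q}}(M)\ \xrightarrow{\ \sim\ }\ \on{End}_{\on{RMod}_{AG}}(AG)\simeq AG\,.\]

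The heart of the argument is the computation of $\on{End}_{\mathcal{Q}}(M)$. By adjunction,
\[\on{End}_{\mathcal{Q}}(M)\simeq\on{Map}_{\on{RMod}_A}\!\big(A,\,j^Rj(A)\big)\,.\]
The essential input is the orbit formula identifying the monad of the quotient adjunction with the group-algebra monad, $j^Rj\simeq\bigoplus_{g\in G}g^*$, where $g^*$ denotes the autoequivalence of $\on{RMod}_A$ induced by $\rho(g)$. Granting this, compactness of $A$ yields
\[\on{End}_{\mathcal{Q}}(M)\simeq\bigoplus_{g\in G}\on{Map}_{\on{RMod}_A}(A,g^*A)\simeq\bigoplus_{g\in G}A=A^{\amalg G}\,,\]
the underlying spectrum of $AG$. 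One then checks that composition in $\mathcal{Q}$ induces the skew group multiplication: the monad structure on $\bigoplus_g g^*$ sends the $(g_1,g_2)$-summand to the $g_1g_2$-summand and twists the second factor by $\rho(g_1)$, reproducing $(g_1,a_1)\cdot(g_2,a_2)=(g_1g_2,\,a_1\,(g_1.a_2))$. This identifies $\on{End}_{\mathcal{Q}}(M)$ with $AG$ compatibly with $\Phi$, so that $\Phi$ is an equivalence.

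I expect the main obstacle to be upgrading the underlying equivalence $j^Rj\simeq\bigoplus_g g^*$ to an equivalence of monads and matching the resulting $\mathbb{E}_1$-structure on $\on{End}_{\mathcal{Q}}(M)$ with the multiplication of $AG$ coherently, rather than merely on homotopy groups. This is exactly where the universal construction of the skew group ring spectrum (\Cref{rem:skewgroupspectrum}) must be brought to bear: the cleanest route is to present both $\on{End}_{\mathcal{Q}}(M)$ and $AG$ as one and the same algebra object assembled from the data of the endofunctor $\bigoplus_g g^*$ together with the group multiplication, so that the identification of the two multiplications becomes a formal consequence of that construction rather than a hand computation.
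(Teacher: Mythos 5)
Your overall strategy (compute the colimit by matching compact generators, using the monad formula for the quotient adjunction) is sound at the level of underlying $\infty$-categories, and several ingredients are exactly the paper's: the identification $j^Rj\simeq\coprod_{g\in G}\rho(g)$ is \Cref{lem:monad}(2), and the generator-matching argument is the one the paper runs for the dg-statement \Cref{prop:skewgroupcatiscolim}. But as a proof of the spectral theorem your proposal has a genuine gap, in two related places, both of which you gesture at without closing. First, the construction of $\Phi$: a cocone under \eqref{eq:fun1} is not determined by bimodule equivalences $AG\simeq{}^{\rho(g)}AG$ for each $g$ together with the element-level relation $(g,1)\cdot(e,a)=(e,g.a)\cdot(g,1)$. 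Since $BG$ has nondegenerate simplices in every dimension, the cocone comprises an infinite tower of coherence data, and the ``units'' $(g,1)$ only produce the low-dimensional part on homotopy groups. Worse, with the paper's actual definition of $AG$ (\Cref{def:skewgroupspectrum}), the skew group ring spectrum is not given by an explicit multiplication at all --- the formula $(g_1,a_1)\cdot(g_2,a_2)=(g_1g_2,a_1\cdot(g_1.a_2))$ is only verified on homotopy groups in \Cref{rem:skewgroupspectrum} --- so the elements $(g,1)$ are not available as primitive data from which to build $\Phi$. Second, even granting $\Phi$, upgrading $\on{End}_{\mathcal{Q}}(M)\simeq A^{\amalg G}$ to an equivalence of $\mathbb{E}_1$-algebras with $AG$, compatibly with $\Phi$, is the entire content of the theorem in this setting; your closing paragraph correctly diagnoses that this requires the universal construction, but that step is not a residual ``obstacle'' --- it \emph{is} the proof, and it is left undone.

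The paper avoids both problems by making the universal construction primary and never computing endomorphisms at all: via \Cref{prop:Gaction} the $G$-action on $\on{RMod}_A$ is a $\on{Mod}_R^{\amalg G}$-linear structure; the functor $\xi^{\on{Alg}}$ of \Cref{prop:algBG} packages $A$ with all of its equivariance coherences into an algebra $\tilde{A}\in\on{Alg}(\on{Mod}_R^{\amalg G})$ with $\on{RMod}_{\tilde{A}}(\on{Mod}_R^{\amalg G})\simeq\on{RMod}_A$ as $\on{Mod}_R^{\amalg G}$-linear categories, and $AG\coloneqq\psi(\tilde{A})$ by definition. The colimit is $\psi_!(\on{RMod}_A)$ by \Cref{lem:colimastensorproduct}, and base change along the monoidal functor $\psi$ \cite[Thm.~4.8.4.6]{HA} gives $\psi_!\on{RMod}_{\tilde{A}}(\on{Mod}_R^{\amalg G})\simeq\on{RMod}_{\psi(\tilde{A})}=\on{RMod}_{AG}$, with every coherence absorbed by Lurie's $\Theta_*$ machinery. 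To repair your argument you would either adopt the paper's definition of $AG$, at which point \Cref{prop:algBG} and the base-change theorem render your generator computation redundant, or define $AG$ by an explicit coherent construction and prove it agrees with $\psi(\tilde{A})$ --- which is the same work. A further symptom that your route is too rigid: it relies on invertibility of the elements $(g,1)$ (the map $x\mapsto(g,1)\cdot x$ permutes the summands of $A^{\amalg G}$ bijectively only because $G$ is a group), whereas the paper's proof applies verbatim to monoids, see \Cref{rem:monoids}.
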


We remark that the limit of the functor \eqref{eq:fun1} is equivalent to its colimit, since the theorem is formulated in the setting of presentable $\infty$-categories, and hence to $\on{RMod}_{AG}$, see \Cref{lem:limit}.

\subsection*{Comparison with previous results}\label{introsec:previousresults}

\subsubsection*{Classical skew group algebras}

Given a category $C$ with an action by a group $G$, we denote by $C^G$ the category of $G$-equivariant objects. Note that $C^G$ describes the homotopy limit of a functor $BG\to \on{Cat}$, mapping the basepoint of $BG$ to $C$, see \Cref{rem:equivariant=lim}. Given a (not necessarily finite dimensional) algebra $A$ over a commutative ring $k$, we denote by $\on{Mod}(A)$ the (non-small) abelian category of right $A$-modules. Using \Cref{thm:colimBG} and \Cref{lem:limit}, we obtain the following:

\begin{corollary}\label{cor:modAG}
Let $k$ be a commutative ring and $G$ a group acting on a $k$-algebra $A$. There exists an equivalence of derived $\infty$-categories 
\[ \D(A\ast G)\simeq \D(A)^G\coloneqq \on{lim}_{BG}\D(A)\,.\]
Passing to homotopy $1$-categories, this equivalence restricts to an equivalence of abelian subcategories $\on{Mod}(A\ast G)\simeq \on{Mod}(A)^G$. 

If furthermore $G$ is finite, $k$ a field, and $A$ finite dimensional over $k$, then this equivalence restricts to the abelian categories of finite dimensional modules: $\on{mod}(A\ast G)\simeq \on{mod}(A)^G$. 
\end{corollary}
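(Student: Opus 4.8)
The plan is to derive \Cref{cor:modAG} from \Cref{thm:colimBG} together with the comparison between colimits and limits furnished by \Cref{lem:limit}. The key observation is that for a discrete commutative ring $k$ and an ordinary $k$-algebra $A$, the Eilenberg--MacLane spectrum exhibits $A$ as an object of $\on{Alg}(\on{Mod}_k)$, so \Cref{thm:colimBG} applies verbatim: the colimit of the composite $BG\to\on{Alg}(\on{Mod}_k)\to\on{LinCat}_{\on{Mod}_k}$ sending $\ast$ to $\on{RMod}_A\simeq \D(A)$ is $\on{RMod}_{AG}\simeq \D(A\ast G)$. By \Cref{lem:limit} this colimit coincides with the corresponding limit, and the limit of $BG\to\on{LinCat}_{\on{Mod}_k}$ with value $\D(A)$ is by definition $\D(A)^G=\on{lim}_{BG}\D(A)$. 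Chaining these identifications yields the first equivalence $\D(A\ast G)\simeq \D(A)^G$.

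Next I would pass to homotopy $1$-categories. The equivalence $\D(A\ast G)\simeq \D(A)^G$ is an equivalence of $k$-linear stable $\infty$-categories, so it induces an equivalence on hearts with respect to the natural $t$-structures. On the left, the heart of the standard $t$-structure on $\D(A\ast G)$ is the abelian category $\on{Mod}(A\ast G)$. On the right, I must identify the heart of $\D(A)^G=\on{lim}_{BG}\D(A)$ with $\on{Mod}(A)^G$. This is where the argument requires care: one checks that the $t$-structure on the homotopy limit is computed objectwise, so that a $G$-equivariant object of $\D(A)$ lies in the heart precisely when its underlying object of $\D(A)$ does, and the heart of this $t$-structure is the category of $G$-equivariant objects in $\on{Mod}(A)$, i.e.\ $\on{Mod}(A)^G$. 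Invoking \Cref{rem:equivariant=lim}, which identifies $C^G$ with $\on{lim}_{BG}C$, makes this precise at the level of abelian categories, and the restriction of the stable equivalence to hearts gives $\on{Mod}(A\ast G)\simeq \on{Mod}(A)^G$.

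Finally, for the finiteness statement, I would restrict the abelian equivalence $\on{Mod}(A\ast G)\simeq \on{Mod}(A)^G$ to subcategories of finite dimensional modules. When $G$ is finite and $A$ is finite dimensional over a field, the skew group algebra $A\ast G=AG$ is again finite dimensional, so finite dimensionality of $A\ast G$-modules is an intrinsic, equivalence-preserved condition on the left. On the right, a $G$-equivariant object is finite dimensional exactly when its underlying $A$-module is, and the equivalence matches underlying modules; hence it carries finite dimensional objects to finite dimensional objects in both directions. This recovers the classical equivalence $\on{mod}(A\ast G)\simeq \on{mod}(A)^G$ of \Cref{prop:modulecat}.

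The main obstacle I anticipate is the identification of the heart of the $t$-structure on the homotopy limit $\on{lim}_{BG}\D(A)$ with the equivariant abelian category $\on{Mod}(A)^G$. One must verify that the natural $t$-structure on the limit is the objectwise one and that taking hearts commutes with the homotopy limit over $BG$; since $BG$ is not a finite diagram when $G$ is infinite, this compatibility is not entirely formal and relies on the fact that the forgetful functor $\D(A)^G\to\D(A)$ is conservative and $t$-exact, so that heart detection and the equivalence with $\on{Mod}(A)^G$ can be checked after forgetting the $G$-action.
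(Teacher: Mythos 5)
Your proposal is correct and takes essentially the same route as the paper: the stable equivalence from \Cref{thm:colimBG} together with \Cref{lem:limit}, followed by matching hearts through the conservative, $t$-exact evaluation functor $\D(A)^G\to\D(A)$ (which the paper identifies, via coCartesian sections and \Cref{rem:equivariant=lim}, with the monadic functor $\on{RHom}(A\ast G,\mhyphen)\colon\D(A\ast G)\to\D(A)$), and then restricting to finite dimensional modules. The only phrase to soften is that a stable equivalence ``induces an equivalence on hearts with respect to the natural $t$-structures'' --- this holds only after the cone-functor compatibility just mentioned is established, which your closing paragraph in effect supplies, exactly as the paper does.
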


The equivalence of the small module categories from \Cref{cor:modAG} was shown in the case that the characteristic of $k$ does not divide the order of $G$ in \cite[Prop.~2.48]{Dem11}, but appears to be new in this generality. Corresponding equivalences of the bounded derived categories were deduced in \cite[Thm.~7.1]{Ela14} or \cite{Che15,Bal11}. 

The proof of \Cref{cor:modAG} is given in \Cref{subsec:quotientdgcat}.

\subsubsection*{Ring spectra}

Previous results in the literature concern a version of \Cref{thm:colimBG} in the case that $A=R$ is the base ring spectrum and $G$ is a monoid in the $\infty$-category of spaces (generalizing groups). The restriction $A=R$ corresponds to the assertion that the functor $BG\to \on{LinCat}_{\on{Mod}_R}$ is pointed. We refer to \cite[Prop.~3.13]{Dou05} or \cite[Thm.~0.0.7]{HM23} in case of $R=\mathbb{S}$ the sphere spectrum, and \cite[Thm.~7.13]{CCRY22} for general $R$. The analogue of the skew group algebra in this setting is also called the Thom spectrum.

\subsection*{Acknowledgements}

I thank Patrick Le Meur for help in navigating the literature on skew group algebras. I thank Bernhard Keller for suggesting \Cref{lem:cofinal} and further comments. I also thank Bastiaan Cnossen for explaining their proof of \cite[Lem.~4.49]{CCRY22} and helpful remarks. Finally, I thank Claire Amiot, Norihiro Hanihara, and Pierre-Guy Plamondon for inspiring discussions.

This project has received funding from the European Union’s Horizon 2020 research and innovation programme under the Marie Skłodowska-Curie grant agreement No 101034255. The author is a member of the Hausdorff Center for Mathematics at the University of Bonn (DFG GZ 2047/1, project ID 390685813).

\section*{Higher categorical preliminaries}

We generally follow the notation and conventions of \cite{HTT,HA}.

\begin{itemize}
\item We consider dg-categories over a fixed base field $k$. The derived $\infty$-category $\D(\B)$ of a dg-category $\B$ is defined as the $\on{Ind}$-completion of the dg-nerve $N_{\on{dg}}(\on{Perf}(\B))$ of the dg-category of cofibrant compact right dg-$B$-modules. The passage to the derived $\infty$-category defines a colimit preserving and reflecting functor between $\infty$-categories
\[
\D(\mhyphen)\colon N(\on{dgCat}_k)[M^{-1}]\to \on{LinCat}_{\on{Mod}_k}\,,
\]
with $M$ the collection of Morita equivalences, see also \Cref{rem:htpycolim}.
\item For a discussion of $\infty$-operads, algebra objects and modules, we refer to Chapter 2 and 4 of \cite{HA}.
\item $R$ will usually denote a base $\mathbb{E}_\infty$-ring spectrum. Given a $\on{Mod}_R$-linear $\infty$-category $\D$ and $X,Y\in \D$, we write $\on{Mor}_\D(X,Y)\in \on{Mod}_R$ for the $\on{Mod}_R$-linear morphism object, see \cite[Def.~4.2.1.28]{HA}.
\item Given a regular cardinal $\kappa$, we denote by $\on{LinCat}_{\on{Mod}_R}^{\kappa}= \on{Mod}_{\on{Mod}_R}(\mathcal{P}r^L_{\kappa})$ the presentable $\infty$-category of $\on{Mod}_R$-modules in the presentable $\infty$-category $\mathcal{P}r^L_{\kappa}$ of $\kappa$-compactly generated presentable $\infty$-categories, as in \cite[Notation~5.3.2.8]{HA}. Its presentability is the advantage of $\on{LinCat}_{\on{Mod}_R}^{\kappa}$ over $\on{LinCat}_{\on{Mod}_R}$. 

Given a $\on{Mod}_R$-linear monoidal $\infty$-category $\C\in \on{Alg}(\on{LinCat}_{\on{Mod}_R})$, we choose a sufficiently large regular cardinal $\kappa$ as in \cite[Lem.~5.3.2.12]{HA}, for which in particular $\C$ is $\kappa$-compactly generated. We also denote 
\[ 
\on{LinCat}_{\C}^\kappa=\on{LMod}_{\C}(\on{LinCat}_{\on{Mod}_R}^{\kappa})
\]
for the presentable $\infty$-category of $\kappa$-compactly generated $\C$-linear categories. 
\item Given a presentable monoidal $\infty$-category $\C$, in Sections 4.8.3-4.8.5 of \cite{HA}, Lurie describes a fully faithful (see \cite[Thm.~4.8.5.5]{HA}) functor\footnote{More precisely, the functor $\Theta_\ast$ considered here is a restriction the functor $\Theta_\ast$ of \cite{HA}, having additional functoriality in $\C$. We leave the choice of $\C$ in the notation for $\Theta_\ast$ implicit.} 
\[ \Theta_\ast\colon \on{Alg}(\C)\longrightarrow (\on{LinCat}_{\C})_{\C/}\,,\] 
mapping an algebra object $A\in \C$ to its $\infty$-category $\on{RMod}_A(\C)$ of right modules, together with the functor $\mhyphen\otimes A\colon \C\to \on{RMod}_A(\C)$ mapping the monoidal unit to $A$. 

The image of the functor $\Theta_*$ is concretely characterized in \cite[Prop.~4.8.5.8]{HA}. The functor $\Theta$ is defined as the composite of $\Theta_\ast$ with the forgetful functor 
\[ (\on{LinCat}_{\on{Mod}_R})_{\on{Mod}_R/}\to \on{LinCat}_{\on{Mod}_R}\,.\]
\end{itemize}

\section{\texorpdfstring{$\infty$}{Infinity}-categorical group actions}\label{sec:catgroquot}

For the entire section, we fix a group $G$ (which is not required to be finite). 

\subsection{The categorical group algebra}

Given a set (or a space) $X$, we denote by $\on{Mod}_R^{\amalg X}$ the colimit over $X$ of the constant diagram in $\on{LinCat}_{\on{Mod}_R}$ with value $\on{Mod}_R$. Note that $\on{Mod}_R^{\amalg X}\simeq \on{Mod}_R^{\times X}$ also describes the limit over $X$, which follows from the equivalence $\mathcal{P}r^L\simeq (\mathcal{P}r^R)^{\on{op}}$. 

There is an essentially unique colimit preserving functor
\[ \on{Mod}_R^{\amalg (\mhyphen)}\colon \mathcal{S}\to \on{LinCat}_{\on{Mod}_R}=\on{Mod}_{\on{Mod}_R}(\mathcal{P}r^L)\,,\]
determined by mapping $\ast$ to $\on{Mod}_R$. Furthermore, since $\mathcal{S}$ is the unit object in the symmetric monoidal $\infty$-category $\mathcal{P}r^L$, the functor $\on{Mod}_R^{\amalg}$ lifts essentially uniquely to a symmetric monoidal functor $(\on{Mod}_R^{\amalg (\mhyphen)})^{\otimes}\colon \mathcal{S}^{\otimes}\to  \on{LinCat}_{\on{Mod}_R}^{\otimes}$, see \cite[Prop.~3.2.1.8]{HA}.

\begin{construction}\label{constr:catgroupalg}
The group $G$ gives rise to an $\on{Assoc}$-algebra object in the Cartesian symmetric monoidal $\infty$-category $\on{Set}^{\otimes}$ of sets, i.e.~a morphism of $\infty$-operads $\on{Assoc}^{\otimes}\to \on{Set}^{\otimes}$ over $N(\on{Fin}_\ast)$, see also \cite[Def.~4.1.1.3]{HA} for the definition of $\on{Assoc}^{\otimes}$.

Composing with the inclusion of symmetric monoidal $\infty$-categories $\on{Set}^{\otimes}\subset \mathcal{S}^{\otimes}$ and the symmetric monoidal functor $(\on{Mod}_R^{\amalg (\mhyphen)})^{\otimes}$, we obtain an $\on{Assoc}$-algebra object $\on{Mod}_R^{\amalg G}\in \on{Alg}_{\on{Assoc}}(\on{LinCat}_{\on{Mod}_R})$, meaning a $\on{Mod}_R$-linear monoidal $\infty$-category.

We note that any group homomorphism $G\to G'$ induces a monoidal functor $\on{Mod}_R^{\amalg G}\to \on{Mod}_R^{\amalg G'}$.
\end{construction}

\begin{definition}\label{def:catgroupalg}
The monoidal $\infty$-category $\on{Mod}_R^{\amalg G}$ from \Cref{constr:catgroupalg} is called the categorical group algebra of $G$.
\end{definition}

Given $g\in G$, we denote the object of $\on{Mod}_R^{\amalg G}$ lying in the $g$-th component with value $R$ by $R_g$. The monoidal unit of $\on{Mod}_R^{\amalg G}$ is given by $R_e$.

\begin{remark}\label{rem:psi}
The unique group homomorphism $\psi\colon G\to \{e\}$ gives by \Cref{constr:catgroupalg} rise to a monoidal functor $\psi\colon \on{Mod}_R^{\amalg G}\to \on{Mod}_R$. Explicitly, $\psi$ maps a $G$-tuple $(A_g)_{g\in G}$ to $\coprod_{g\in G} A_g$.
\end{remark}

\begin{lemma}\label{lem:locrigid}~
\begin{enumerate}[(1)]
\item The monoidal $\infty$-category $\on{Mod}_R^{\amalg G}$ is locally rigid in the sense of \textnormal{\cite[Def.~D.7.4.1]{SAG}}.
\item If $F\colon \C\to \D$ is a $\on{Mod}_R^{\amalg G}$-linear functor whose right adjoint $G$ preserves colimits, then $G$ is also $\on{Mod}_R^{\amalg G}$-linear.
\end{enumerate}
\end{lemma}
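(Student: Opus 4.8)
\emph{Part (1).} The plan is to identify the compact objects of $\V\coloneqq\on{Mod}_R^{\amalg G}$ explicitly and then verify the defining conditions of local rigidity on them. By \Cref{rem:psi} its underlying $\infty$-category is the $G$-fold product $\prod_{g\in G}\on{Mod}_R$, with objects the tuples $(A_g)_{g\in G}$; it is compactly generated by the objects $R_g$, and its compact objects are exactly the tuples supported on finitely many $g$ with perfect entries, i.e.\ the finite coproducts of objects $R_g\otimes P$ with $P\in\on{Perf}_R$. In particular the unit $R_e$ is compact, since $R\in\on{Mod}_R$ is. The decisive structural input is that each $R_g$ is $\otimes$-invertible with inverse $R_{g^{-1}}$, which is precisely the content of $R_{g_1}\otimes R_{g_2}\simeq R_{g_1g_2}$ together with $R_e=\mathbf 1$. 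Hence every compact object $R_g\otimes P$ is dualizable, with dual $R_{g^{-1}}\otimes P^\vee$, and the compact objects form a monoidal subcategory that is closed under duals and contains the unit. Local rigidity in the sense of \cite[D.7.4.1]{SAG} then follows from its characterization in terms of compact objects: $\V$ is compactly generated, its unit is compact, and its compact objects are dualizable and stable under the tensor product.

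\emph{Part (2).} Here I would exploit the same special feature, namely that $\V$ is generated under colimits by the invertible objects $R_g$. First recall the general fact that the right adjoint $G$ of a $\V$-linear functor $F\colon\C\to\D$ carries a canonical lax $\V$-linear structure: for $v\in\V$ and $d\in\D$ the structure map $\lambda_{v,d}\colon v\otimes G(d)\to G(v\otimes d)$ is adjoint to $F(v\otimes G(d))\simeq v\otimes FG(d)\xrightarrow{v\otimes\varepsilon_d} v\otimes d$, where $\varepsilon$ is the counit and the first equivalence is the $\V$-linearity of $F$. This exists with no hypothesis on $G$; what must be shown is that each $\lambda_v$ is an equivalence. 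For an invertible object this is automatic: the lax-structure maps obey $\lambda_{\mathbf 1}=\id$ and $\lambda_{v\otimes v'}\simeq\lambda_v\circ(v\otimes\lambda_{v'})$, so applying these coherences with $\{v,v'\}=\{R_g,R_{g^{-1}}\}$ and using that $R_{g^{\pm1}}\otimes(-)$ are equivalences shows that $\lambda_{R_g}$ admits both a left and a right inverse, hence is an equivalence.

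It remains to propagate this from the generators to all of $\V$. Fixing $d\in\D$, both functors $v\mapsto v\otimes G(d)$ and $v\mapsto G(v\otimes d)$ from $\V$ to $\C$ preserve colimits — the first because $\otimes$ preserves colimits in the first variable, the second for the same reason together with the hypothesis that $G$ preserves colimits — so the full subcategory of objects $v$ on which $\lambda_{v,d}$ is an equivalence is closed under colimits. As it contains every $R_g$, and these generate $\V$ under colimits, $\lambda_v$ is an equivalence for all $v$; that is, the lax structure on $G$ is strong, so $G$ is $\V$-linear. I expect the only genuine obstacle to be the coherence bookkeeping in this second part — producing the canonical lax structure on the right adjoint of a module functor and using its multiplicativity — after which the invertibility of the $R_g$, which is also what powers Part (1), makes both statements fall out. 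This is the expected behaviour of colimit-preserving module functors over a locally rigid base.
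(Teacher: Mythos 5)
Your proof is correct, and its two halves relate differently to the paper's. For Part (1) you and the paper take essentially the same approach: the paper checks conditions (1)--(3) of [SAG, Def.~D.7.4.1] directly and, for condition (4), uses that left and right dualizable objects are closed under the relevant finite (co)limits, reducing to the fact that each $R_g$ is invertible with inverse $R_{g^{-1}}$; your explicit identification of the compact objects as finite coproducts of $R_g\otimes P$ with $P$ perfect is just a more concrete packaging of the same reduction. (One small care point: since $G$ need not be abelian, $\on{Mod}_R^{\amalg G}$ is merely monoidal, not symmetric monoidal, so one should check \emph{left and right} duals separately, as the paper does; your candidate $R_{g^{-1}}\otimes P^\vee$ does serve as both, but this deserves a word.) For Part (2), however, the paper simply cites [SAG, Rem.~D.7.4.4], deducing the statement formally from the local rigidity established in (1), whereas you give a direct, self-contained argument: endow the right adjoint with its canonical lax $\on{Mod}_R^{\amalg G}$-linear structure (this existence statement is the one nontrivial coherence input, and is exactly what the paper flags via [HA, Cor.~7.3.2.12]; once it exists, strongness is a property), verify invertibility of the structure maps on the invertible generators $R_g$ via the multiplicativity coherence $\lambda_{v\otimes v',d}\simeq \lambda_{v,v'\otimes d}\circ(v\otimes\lambda_{v',d})$, and propagate to all of $\on{Mod}_R^{\amalg G}$ by naturality and colimits --- correctly locating the colimit-preservation hypothesis on the right adjoint as entering only in this last step. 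Your route buys independence from [SAG] and in fact uses strictly less than local rigidity (only that the invertible objects $R_g$ generate under colimits), so your Part (2) does not depend on your Part (1) at all, whereas in the paper (2) is a corollary of (1); the paper's citation, in exchange, is shorter and places the lemma in the general locally rigid framework where the analogous statement holds for dualizable (not necessarily invertible) generators.
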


\begin{proof}
We begin with showing the local rigidity. Conditions (1) and (2) of Definition D.7.4.1 are clear. The unit object of $\on{Mod}_R^{\amalg G}$ is given by the object $R_{e}$, with $e$ the unit of $G$, and clearly compact, giving condition (3). 

The subset of left and right dualizable objects is closed under finite limits and colimits. For condition (4), it thus suffices to check that each object $R_g$ with $g\in G$ admits a left and right dual. Since $R_g$ is invertible with inverse $R_{g^{-1}}$, this is clear.

Part (2) of the Lemma follows from part (1) by \cite[Rem.~D.7.4.4]{SAG}. The broken reference at the end of loc.~cit.~may refer to the fact that the $R$-linear structure on $G$ in question is induced by the $R$-linear structure of $F$ via \cite[Cor.~7.3.2.12]{HA}, so that the existence of the $R$-linear structure in question is a property.
\end{proof}

\begin{example}\label{ex:adjpsi}
We can consider the monoidal functor $\psi\colon \on{Mod}_R^{\amalg G} \to \on{Mod}_R$ as a $\on{Mod}_R^{\amalg G}$-linear functor. Its right adjoint $\psi^R\colon \on{Mod}_R\to \on{Mod}_R^{\amalg G}$, $G$-componentwise given by the identity functor, inherits by \Cref{lem:locrigid} the structure of a $\on{Mod}_R^{\amalg G}$-linear functor. 

The composite $\psi^R\circ \psi\colon \on{Mod}_R^{\amalg G}\to \on{Mod}_R^{\amalg G}$ is equivalent to $\coprod_{g\in G} R_g\otimes (\mhyphen)$ and inherits the structure of a monad. 

If $G$ is a finite group, then the right adjoint $\psi^{RR}$ of $\psi^R$ agrees with $\psi$. The inherited $\on{Mod}_R^{\amalg G}$-linear structure of $\psi^{RR}$ also coincides with that of $\psi$. This follows from the fact that $\on{Mod}_R^{\amalg G}$ is the free $\on{Mod}_R^{\amalg G}$-linear $\infty$-category generated by a point, so that both linear functors are determined by the value of the monoidal unit $R_e$. 
\end{example}

\subsection{Group actions and group quotients}\label{subsec:grpquot} \label{subsec:groupactionviacatgrpalg}

We denote by $BG$ the classifying space of $G$, meaning the $1$-category with a unique object $\ast$ and endomorphisms $G$. We will not distinguish between $BG$ and its nerve $N(BG)\in \on{Set}_\Delta$ in notation. 

A $G$-action on an object $\C$ in an $\infty$-category $\mathscr{C}$ is understood to be a functor $BG\to \mathscr{C}$, mapping $\ast$ to $\C$. 

\begin{definition}
Let $\mathscr{C}$ be an $\infty$-category that admits limits and colimits. Given $G$-action $\rho\colon BG\to \mathscr{C}$ on $\C\in \mathscr{C}$, we call 
\begin{itemize}
\item the limit $\C^G\coloneqq \on{lim}(\rho)$ the fixed points of the $G$-action on $\C$.
\item the colimit $\C_G\coloneqq \on{colim}(\rho)$ the group quotient of $\C$ by $G$.
\end{itemize}
\end{definition}

\begin{lemma}\label{lem:limit}
Consider a functor $\rho_\C\colon BG\to \on{LinCat}_{\on{Mod}_R}$ describing a $G$-action on a $\on{Mod}_R$-linear $\infty$-category $\C$. Its colimits $\C_G$ is equivalent to its limit $\C^G$ in $\on{LinCat}_{\on{Mod}_R}$. 
\end{lemma}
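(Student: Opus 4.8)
The plan is to use that every transition functor of $\rho_\C$ is an equivalence, together with the duality $\mathcal{P}r^L\simeq(\mathcal{P}r^R)^{\on{op}}$ of \cite[Thm.~5.5.3.18]{HTT}, under which colimits in $\mathcal{P}r^L$ are computed as limits of the associated diagram of right adjoints. For each $g\in G$ the functor $\rho_\C(g)\colon \C\to\C$ is an equivalence with inverse $\rho_\C(g^{-1})$, so its right adjoint is again a colimit-preserving and $\on{Mod}_R$-linear functor, namely $\rho_\C(g^{-1})$ itself. Passing to right adjoints therefore yields a well-defined diagram $\rho_\C^R\colon (BG)^{\on{op}}\to \on{LinCat}_{\on{Mod}_R}$, and the cited duality gives
\[ \C_G=\on{colim}_{BG}\rho_\C\;\simeq\;\on{lim}_{(BG)^{\on{op}}}\rho_\C^R\,. \]

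First I would make the inversion symmetry of $BG$ explicit: the group isomorphism $G\xrightarrow{\sim}G^{\on{op}}$, $g\mapsto g^{-1}$, induces an equivalence of $1$-categories $\iota\colon BG\xrightarrow{\sim}(BG)^{\on{op}}$. Evaluating on the generating morphisms shows $\rho_\C^R\circ\iota\simeq\rho_\C$, since the morphism labelled $g$ is sent by $\rho_\C^R\circ\iota$ to the right adjoint of $\rho_\C(g^{-1})$, which is $\rho_\C(g)$. As the limit is unchanged under reindexing along an equivalence, this yields $\on{lim}_{(BG)^{\on{op}}}\rho_\C^R\simeq\on{lim}_{BG}\rho_\C=\C^G$, and combining with the previous display produces the desired equivalence $\C_G\simeq\C^G$ at the level of underlying presentable $\infty$-categories.

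The main point requiring care is that this equivalence should hold in $\on{LinCat}_{\on{Mod}_R}$, and not merely after forgetting to $\mathcal{P}r^L$. To secure the $\on{Mod}_R$-linear statement cleanly, I would express the $G$-action as a left module over the categorical group algebra $\on{Mod}_R^{\amalg G}$, so that the quotient and the fixed points are identified with the tensor product and the cotensor
\[ \C_G\simeq \on{Mod}_R\otimes_{\on{Mod}_R^{\amalg G}}\C\,,\qquad \C^G\simeq \on{Fun}_{\on{Mod}_R^{\amalg G}}(\on{Mod}_R,\C)\,, \]
where $\on{Mod}_R$ carries the augmentation module structure along $\psi$. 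By \Cref{lem:locrigid} the monoidal category $\on{Mod}_R^{\amalg G}$ is locally rigid, and by \Cref{ex:adjpsi} the augmentation $\psi$ admits a $\on{Mod}_R^{\amalg G}$-linear right adjoint $\psi^R$; this exhibits $\on{Mod}_R$ as a dualizable, self-dual $\on{Mod}_R^{\amalg G}$-module, whence the canonical comparison between tensoring and cotensoring with it is a $\on{Mod}_R$-linear equivalence. I expect the main obstacle to be exactly this identification of $\C^G$ with the cotensor and the verification, valid for arbitrary $G$ via local rigidity, that the augmentation module is self-dual; granting this, the equivalence $\C_G\simeq\C^G$ in $\on{LinCat}_{\on{Mod}_R}$ follows formally.
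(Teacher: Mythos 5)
Your first half coincides with the paper's own proof: both arguments pass to right adjoints under the duality $\mathcal{P}r^L\simeq(\mathcal{P}r^R)^{\on{op}}$, observe that the right adjoint of $\rho_\C(g)$ is the equivalence $\rho_\C(g^{-1})$, and use $BG\simeq (BG)^{\on{op}}$ (your $\iota$, induced by $g\mapsto g^{-1}$) to identify the right-adjoint diagram with $\rho_\C$ itself. One shared caveat: asserting $\rho_\C^R\circ\iota\simeq\rho_\C$ by ``evaluating on the generating morphisms'' is not by itself a construction of an equivalence of functors out of $BG$ --- one needs the coherent identification, for instance via the observation that $\rho_\C$ factors through the maximal sub-$\infty$-groupoid, where passage to right adjoints is implemented by the canonical involution --- but the paper is equally brief on this point, so it does not distinguish your argument from the paper's.

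The genuine divergence, and the gap, lies in the upgrade from $\mathcal{P}r^L$ to $\on{LinCat}_{\on{Mod}_R}$. The paper gets this almost for free: the forgetful functor $\on{LMod}_{\on{Mod}_R^{\amalg G}}(\mathcal{P}r^L)\to\mathcal{P}r^L$ preserves and reflects both limits and colimits \cite[Cor.~3.4.3.6,~3.4.4.6]{HA}, and by part (2) of \Cref{lem:locrigid} the right-adjoint diagram of the colimit \emph{cocone} is again linear (its functors have colimit-preserving right adjoints, cf.\ \Cref{lem:monad}); hence that right-adjointed cocone is itself a limit cone in the linear setting, and one never has to compare two separately constructed universal objects. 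You instead identify $\C_G$ with the tensor and $\C^G$ with the cotensor over $\on{Mod}_R^{\amalg G}$ and appeal to self-duality of the augmentation module. As written this is where your proposal fails to close: the self-duality of the augmentation module is essentially the ambidexterity statement for $BG$ in $\mathcal{P}r^L$ (cf.\ \cite{CCRY22}), i.e.\ at least as strong as the lemma being proved, and the justification you offer --- that $\psi$ admits the $\on{Mod}_R^{\amalg G}$-linear colimit-preserving right adjoint $\psi^R$ of \Cref{ex:adjpsi} --- does not by itself produce duality data (no evaluation and coevaluation, no triangle identities). You flag this yourself (``granting this''), so the decisive step is deferred rather than proved; note also that the cotensor description of $\C^G$ is itself unestablished (the paper only records the colimit analogue, \Cref{lem:colimastensorproduct}), and that for nonabelian $G$ the monoidal $\infty$-category $\on{Mod}_R^{\amalg G}$ is only $\mathbb{E}_1$, so ``self-dual'' must carefully pair the left augmentation module with the right one. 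Your route can in fact be completed --- $\psi^R$ is linear, conservative and colimit-preserving, hence monadic, exhibiting the augmentation module as modules over the internal group algebra $\coprod_{g\in G}R_g$ in $\on{Mod}_R^{\amalg G}$, and module categories over an internal algebra object are dualizable with dual the opposite-sided module category --- but this is substantive work that your sketch omits, whereas the paper's preservation-and-reflection argument avoids all dualizability considerations.
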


\begin{proof}
The conservative functor $\on{LMod}_{\on{Mod}_R^{\amalg G}}(\mathcal{P}r^L)\to \mathcal{P}r^L$ reflects limits and colimits and preserves both by \cite[Cor.~3.4.3.6,~3.4.4.6]{HA}. The right adjoint diagram of the colimit diagram of $\rho_\C$ again lies in $\on{LMod}_{\on{Mod}_R^{\amalg G}}(\mathcal{P}r^L)$ by \Cref{lem:locrigid}. It is a limit diagram, since its underlying diagram in $\mathcal{P}r^L$ is, as follows from the duality $\mathcal{P}r^L\simeq (\mathcal{P}r^R)^{\on{op}}$ and the fact that the functors $\mathcal{P}r^R,\mathcal{P}r^L\to \on{Cat}_\infty$ preserve and reflect limits. Note that $BG^{\op}\simeq BG$, since $BG$ is a space, hence the right adjoint diagram of $\rho_\C$ is equivalent to $\rho_\C$. 
\end{proof}

We choose a sufficiently large regular cardinal $\kappa$. Then $G$-actions on $\kappa$-compactly generated $\on{Mod}_R$-linear $\infty$-categories can be equivalently expressed as follows:

\begin{proposition}[$\!\!$\cite{CCRY22,BCSY23}]\label{prop:Gaction} 
There exists a canonical equivalence of $\infty$-categories 
\[
\on{Fun}(BG,\on{LinCat}_{\on{Mod}_R}^\kappa) \simeq \on{LinCat}_{\on{Mod}_R^{\amalg G}}^\kappa
\,.
\] 
\end{proposition}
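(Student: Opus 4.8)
The plan is to identify both sides with modules over the categorical group algebra and exploit the universal property of $\on{Mod}_R^{\amalg G}$ as the free $\on{Mod}_R$-linear monoidal $\infty$-category generated by $G$ under colimits. The key conceptual input, already recalled in the excerpt, is that a $G$-action (a functor $BG\to \on{LinCat}_{\on{Mod}_R}^\kappa$) should be the same data as a left module structure over the monoidal object encoding $G$. So the target of the equivalence, $\on{LinCat}_{\on{Mod}_R^{\amalg G}}^\kappa=\on{LMod}_{\on{Mod}_R^{\amalg G}}(\on{LinCat}_{\on{Mod}_R}^\kappa)$, is by definition the $\infty$-category of left modules, and I want to match this against $\on{Fun}(BG,\on{LinCat}_{\on{Mod}_R}^\kappa)$.

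The cleanest approach I would take goes through the general machinery of \cite{HA} relating algebra objects in a monoidal $\infty$-category to modules. First I would recall that $BG$, viewed as a monoidal object, arises from the monoid $\underline{G}\colon N(\Delta^{\op})\to N(\on{Set})$ via \Cref{constr:catgroupalg}; indeed the whole point of that construction is that $\on{Mod}_R^{\amalg G}$ is assembled as $\on{Mod}_R^{\amalg (\mhyphen)}$ applied to this simplicial object. The plan is then to invoke the tensor-hom / day-convolution style adjunction: functors out of $BG$ into a $\on{Mod}_R$-linear target correspond, by the universal property of the free construction $\on{Mod}_R^{\amalg(\mhyphen)}$ together with the fact that $\on{LinCat}_{\on{Mod}_R}^\kappa$ is presentable and tensored over $\on{LinCat}_{\on{Mod}_R}$, to left $\on{Mod}_R^{\amalg G}$-modules. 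Concretely, I would produce the equivalence as follows: a functor $BG\to \on{LinCat}_{\on{Mod}_R}^\kappa$ sending $\ast\mapsto\C$ is the data of a monoidal action map $G\times\C\to\C$ (colimit-extended), which is exactly an action of $\on{Mod}_R^{\amalg G}$ on $\C$ once we transport along the colimit-preserving functor $\on{Mod}_R^{\amalg(\mhyphen)}$. This is the content of \cite[Thm.~7.13]{CCRY22} or the corresponding statement in \cite{BCSY23}, which I would cite for the precise comparison.

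To make this rigorous rather than merely heuristic, I would phrase it using the straightening/unstraightening or the monadicity already implicit in \Cref{lem:locrigid}. Since $\on{Mod}_R^{\amalg G}$ is locally rigid by \Cref{lem:locrigid}(1), the forgetful functor $\on{LMod}_{\on{Mod}_R^{\amalg G}}(\on{LinCat}_{\on{Mod}_R}^\kappa)\to \on{LinCat}_{\on{Mod}_R}^\kappa$ is monadic with monad given by tensoring with $\on{Mod}_R^{\amalg G}$, i.e.\ by $\coprod_{g\in G}R_g\otimes(\mhyphen)$ as in \Cref{ex:adjpsi}. On the other side, a functor from $BG$ likewise exhibits $\C$ together with coherent endo-equivalences indexed by $G$, and the limit/colimit comparison of \Cref{lem:limit} shows the relevant diagrams are controlled by the same combinatorial data $G^{\times n}$. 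Identifying the two monads (both encode the $G$-indexed coproduct of invertible twists) and matching the descent data finishes the identification.

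\textbf{The main obstacle} I anticipate is the bookkeeping of coherences: asserting that the two $\infty$-categories have equivalent objects is easy, but producing a genuine equivalence of $\infty$-categories requires matching the full simplicial (or operadic) coherence data on both sides, not just the underlying monads. In practice this is exactly why the statement is attributed to \cite{CCRY22,BCSY23} rather than proved from scratch — the heavy lifting is a careful unstraightening argument showing that the free-monoidal-on-$G$ universal property of $\on{Mod}_R^{\amalg G}$ transports the functor-category description of $G$-actions to the module-category description. I would therefore structure the proof as a reduction to that external input: set up the two functors explicitly, verify they are adjoint (or that one is an equivalence after checking full faithfulness and essential surjectivity on underlying data via the conservative forgetful functor to $\mathcal{P}r^L$), and cite the comparison theorem for the coherence. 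The cardinal bookkeeping with $\kappa$ is routine given the choices fixed in the preliminaries, so I would not dwell on it.
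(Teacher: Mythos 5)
Your proposal takes essentially the same route as the paper, whose entire proof consists of the citation you anticipate: it simply applies \cite[Lem.~4.49]{CCRY22} with $\mathscr{C}=\on{LinCat}_{\on{Mod}_R}^\kappa$ and $A=G$, so your framing of the argument as ``set up the comparison, then reduce the coherence bookkeeping to the external input'' matches the paper exactly. Two small inaccuracies are worth fixing: the relevant result in \cite{CCRY22} is Lemma~4.49, not Theorem~7.13 (the latter concerns colimits of \emph{pointed} actions, i.e.\ the Thom spectrum statement, and is cited in the paper only for that separate purpose); and the monadicity of the forgetful functor $\on{LMod}_{\on{Mod}_R^{\amalg G}}(\on{LinCat}_{\on{Mod}_R}^\kappa)\to \on{LinCat}_{\on{Mod}_R}^\kappa$ is a general fact about left modules over an algebra object (Barr--Beck, \cite[Thm.~4.7.3.5]{HA}) and does not rest on local rigidity --- \Cref{lem:locrigid} is used in the paper instead to endow colimit-preserving right adjoints with $\on{Mod}_R^{\amalg G}$-linear structure.
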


\begin{proof}
This is \cite[Lem.~4.49]{CCRY22} applied in the case $\mathscr{C}=\on{LinCat}_{\on{Mod}_R}^\kappa$ and $A=G$.
\end{proof}

\begin{remark}
The relation between a $G$-action $\rho\colon BG\to \on{LinCat}_{\on{Mod}_R}^\kappa$ and the $\on{Mod}_R^{\amalg G}$-linear structure on a given $\on{Mod}_R$-linear $\infty$-category $\C$ can be explicitly described as follows: given $g\in G$, its action $\rho(g)\colon \C\simeq \C$ is equivalent to the functor $R_g\otimes \mhyphen\colon \C\simeq \C$. 
\end{remark}

\begin{lemma}[$\!\!${\cite[Lem.~4.51]{CCRY22}}]\label{lem:colimastensorproduct}
Consider a group action $BG\to \on{LinCat}_{\on{Mod}_R}$ on $\C$ and the corresponding $\on{Mod}_R^{\amalg G}$-linear structure of $\C$ of \Cref{prop:Gaction}.
\begin{enumerate}[(1)]
\item The group quotient of $\C$ is equivalent to the relative tensor product:
\begin{equation}\label{eq:grp_quot_=_tensor_product}\C_G\simeq \on{Mod}_R \otimes_{\on{Mod}_R^{\amalg G}} \C\in \on{LinCat}_{\on{Mod}_R}\,.\end{equation}
\item The functor $\C\to \C_G$ appearing in the colimit cone is equivalent to $\psi \otimes_{\on{Mod}_R^{\amalg G}}\C$, with $\psi$ as in \Cref{rem:psi}.
\end{enumerate}
\end{lemma}

\begin{proof}
The $\on{Mod}_R^{\amalg G}$-linear $\infty$-category $\on{Mod}_R^{\amalg G}$ inherits a $G$-action by \Cref{prop:Gaction}, satisfying that $\on{colim}_{BG}\on{Mod}_R^{\amalg G}\simeq \on{Mod}_R$. Using that the relative tensor product preserves colimits in the second entry, we find that 
\[
\C_G\simeq \on{colim}_{BG} (\on{Mod}_R^{\amalg G}\otimes_{\on{Mod}_R^{\amalg G}}\C)\simeq \on{Mod}_R\otimes_{\on{Mod}_R^{\amalg G}}\C \,,
\]
showing statement (1). Statement (2) amounts to the fact that $\psi\colon \on{Mod}_R^{\amalg G}\to \on{Mod}_R$ is the functor from the colimit cone of $\on{Mod}_R^{\amalg G}$. 

The statements is also noted in \cite[Lem.~4.51]{CCRY22}, using the fact that the inclusion $\on{LinCat}_{\on{Mod}_R}^\kappa\subset \on{LinCat}_{\on{Mod}_R}$ preserves colimits \cite[Lem.~5.3.2.9]{HA}. 
\end{proof}

\begin{lemma}\label{lem:monad}
Consider a group action $\rho_\C\colon BG\to \on{LinCat}_{\on{Mod}_R}$  on $\C$ and the corresponding $\on{Mod}_R^{\amalg G}$-linear structure.
\begin{enumerate}[(1)]
\item The functor 
\begin{equation}\label{eq:monadic} \psi^R\otimes_{\on{Mod}_R^{\amalg G}} \C\colon \C_G\simeq \on{Mod}_R \otimes_{\on{Mod}_R^{\amalg G}} \C \longrightarrow \on{Mod}_R^{\amalg G} \otimes_{\on{Mod}_R^{\amalg G}} \C\simeq \C\end{equation}
is monadic. 
\item The endofunctor of $\C$ underlying the adjunction monad $\psi^R\psi\otimes_{\on{Mod}_R^{\amalg G}} \C$ of the adjunction $\psi\otimes_{\on{Mod}_R^{\amalg G}} \C\dashv \psi^R\otimes_{\on{Mod}_R^{\amalg G}} \C$ is given by $\coprod_{g\in G}\rho_\C(g)$.
\item If $G$ is finite, then the functor \eqref{eq:monadic} is also left adjoint to $\C\otimes_{\on{Mod}_R^{\amalg G}} \psi\colon \C\to \C_G$.
\end{enumerate}
\end{lemma}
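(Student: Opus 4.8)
The plan is to deduce part (1) from the Barr--Beck--Lurie monadicity theorem \cite[Thm.~4.7.3.5]{HA}, and then to obtain parts (2) and (3) by transporting the adjunction data of \Cref{ex:adjpsi} through the functor $(\mhyphen)\otimes_{\on{Mod}_R^{\amalg G}}\C$. First I would record the relevant adjunctions. Since $\psi\dashv \psi^R$ as functors $\on{Mod}_R^{\amalg G}\rightleftarrows \on{Mod}_R$, and both $\psi$ and $\psi^R$ are $\on{Mod}_R^{\amalg G}$-linear (\Cref{lem:locrigid} and \Cref{ex:adjpsi}), applying $(\mhyphen)\otimes_{\on{Mod}_R^{\amalg G}}\C$, which preserves adjunctions between linear functors just as in the proof of \Cref{lem:colimastensorproduct}, shows that \eqref{eq:monadic}, namely $U\coloneqq\psi^R\otimes_{\on{Mod}_R^{\amalg G}}\C$, admits the left adjoint $L\coloneqq\psi\otimes_{\on{Mod}_R^{\amalg G}}\C$. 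By \Cref{lem:colimastensorproduct}(2) this $L$ is the canonical functor $\C\to \C_G$ from the colimit cocone.

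For part (1) I then verify the three hypotheses of monadicity for $U$. The functor $U$ preserves all colimits: by \Cref{ex:adjpsi} the functor $\psi^R$ is the diagonal $\on{Mod}_R\to \on{Mod}_R^{\times G}\simeq \on{Mod}_R^{\amalg G}$ and hence colimit-preserving, and relative tensoring of a colimit-preserving linear functor with $\C$ stays colimit-preserving; in particular $U$ preserves geometric realizations of $U$-split simplicial objects and is exact between stable $\infty$-categories. Conservativity is the key point: the essential image of the cocone functor $L\colon \C\to \C_G$ generates $\C_G$ under colimits, a standard property of colimits in $\mathcal{P}r^L$ that is visible from the two-sided bar resolution computing $\C_G=\on{Mod}_R\otimes_{\on{Mod}_R^{\amalg G}}\C$. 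Hence, for $x\in \C_G$ with $U(x)\simeq 0$ we have $\on{Mor}_{\C_G}(L(c),x)\simeq \on{Mor}_\C(c,U(x))\simeq 0$ for all $c\in \C$; the full subcategory of objects $y$ with $\on{Mor}_{\C_G}(y,x)\simeq 0$ is closed under colimits and contains the generating set $L(\C)$, so it is all of $\C_G$, forcing $\on{id}_x\simeq 0$ and thus $x\simeq 0$. Since $U$ is exact this gives conservativity, and Barr--Beck--Lurie applies.

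For part (2), the monad underlying $L\dashv U$ is $UL\simeq (\psi^R\psi)\otimes_{\on{Mod}_R^{\amalg G}}\C$ by functoriality of the relative tensor product. By \Cref{ex:adjpsi} we have $\psi^R\psi\simeq \coprod_{g\in G}R_g\otimes(\mhyphen)$; since $(\mhyphen)\otimes_{\on{Mod}_R^{\amalg G}}\C$ preserves the coproduct, and since tensoring the endofunctor $R_g\otimes(\mhyphen)$ with $\C$ yields the action of $R_g$ on $\C$, which is $\rho_\C(g)$ by the remark following \Cref{prop:Gaction}, we conclude $UL\simeq \coprod_{g\in G}\rho_\C(g)$. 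For part (3), \Cref{ex:adjpsi} provides, for finite $G$, a further adjunction $\psi^R\dashv \psi^{RR}\simeq \psi$ of $\on{Mod}_R^{\amalg G}$-linear functors; tensoring with $\C$ exhibits $U=\psi^R\otimes_{\on{Mod}_R^{\amalg G}}\C$ as left adjoint to $\psi\otimes_{\on{Mod}_R^{\amalg G}}\C$, which is the cocone functor $\C\to\C_G$ denoted $\C\otimes_{\on{Mod}_R^{\amalg G}}\psi$ in the statement.

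The main obstacle is the conservativity in part (1); everything else is a formal consequence of preservation of adjunctions and colimits under relative tensoring, together with the explicit computations already carried out in \Cref{ex:adjpsi}. I would be most careful to justify that the image of the cocone functor generates $\C_G$ under colimits and that $U$ is exact, so that right-orthogonality to a generating set forces vanishing.
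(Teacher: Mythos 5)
Your proof is correct, and for parts (2) and (3) it coincides with the paper's argument: the same computation $\psi^R\psi\simeq\coprod_{g\in G}R_g\otimes(\mhyphen)$ tensored down to $\C$, and the same appeal to \Cref{ex:adjpsi} for the finite-$G$ adjunction $\psi^R\dashv\psi$. For part (1) you take a genuinely different route at the one nontrivial step, conservativity. The paper invokes \Cref{lem:limit} to identify $\C_G$ with the \emph{limit} of $\rho_\C$, hence with coCartesian sections of the Grothendieck construction over $BG$; under this identification the functor \eqref{eq:monadic} becomes evaluation at the unique object $\ast\in BG$, and conservativity is immediate since equivalences of sections are detected pointwise at a single object. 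You instead stay on the colimit side: generation of $\C_G$ under colimits by the image of the cocone functor $L=\psi\otimes_{\on{Mod}_R^{\amalg G}}\C$ (via the bar resolution of the relative tensor product), combined with the stable orthogonality argument, gives $U(x)\simeq 0\Rightarrow x\simeq 0$, and then exactness upgrades this to conservativity. Both are sound; the paper's argument is shorter and does not use stability, while yours has the merit of making explicit the remaining Barr--Beck--Lurie hypotheses that the paper glosses over, namely the existence of the left adjoint (by tensoring the $\on{Mod}_R^{\amalg G}$-linear adjunction $\psi\dashv\psi^R$ from \Cref{lem:locrigid} and \Cref{ex:adjpsi} through the 2-functor $(\mhyphen)\otimes_{\on{Mod}_R^{\amalg G}}\C$) and the preservation of $U$-split geometric realizations (since $\psi^R$ is the colimit-preserving diagonal). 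Note finally that the generation claim you single out as the main obstacle is, in substance, equivalent to the joint conservativity of the limit-cone evaluation functors that the paper uses, so the two proofs ultimately rest on the same fact about colimits in $\mathcal{P}r^L$, packaged differently.
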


\begin{proof}
We begin with showing (1). By \Cref{lem:limit}, $\C_G$ is equivalent to the limit of $\rho_\C$, and hence to the $\infty$-category of coCartesian sections of the Grothendieck construction of $\rho_\C$, see \cite[\href{https://kerodon.net/tag/05RX}{Prop.~05RX}]{Ker}. Under this equivalence, the functor 
\[ \C\otimes_{\on{Mod}_R^{\amalg G}} \psi^R\colon \C_G\simeq \on{colim}(\rho_\C)\longrightarrow \C\]
evaluates a coCartesian section at $\ast \in BG$. Thus, the functor is clearly conservative and hence monadic by \cite[Thm.~4.7.3.5]{HA}. 

For (2), we note the equivalence
\[
\psi^R\psi\otimes_{\on{Mod}_R^{\amalg G}}\C\simeq \coprod_{g\in G}\left( R_g\otimes (\mhyphen)\right) \otimes_{\on{Mod}_R^{\amalg G}} \C \simeq \coprod_{g\in G}\rho_\C(g)\,.
\]

For (3), we note that by \Cref{ex:adjpsi}, if $G$ is finite, the $\on{Mod}_R^{\amalg G}$-linear functor $\psi^R$ is also left adjoint to $\psi$, which induces the desired adjunction.
\end{proof}

\section{Skew group dg-categories}\label{sec:skewgrpdgcat}

In this section, we consider a dg-category $A$ with a strict action by a group $G$. In \Cref{subsec:quotientdgcat}, we relate the skew group dg-category with the $\infty$-categorical group quotient. In \Cref{subsec:homotopycolim}, we describe under some conditions the skew group dg-category as the colimit of a cofibrant diagram. 

\subsection{Skew group dg-categories as $\infty$-categorical group quotients}\label{subsec:quotientdgcat}

We fix a group $G$ with a strict action on a dg-category $A$. We can equivalently consider the action as a functor $\rho_A\colon BG\to \on{dgCat}_k$ mapping $\ast$ to $A$. We define the skew group dg-category $A\ast G$ as follows, analogous to the definition of the skew group $A_\infty$-category in \cite[Def.~5.6]{OZ22}, \cite[Def.~2.11]{AP24}. 
\begin{itemize}
\item The set of objects of $A\ast G$ is given by the set of objects of $A$. We will however write $\tilde{X}\in A\ast G$ for the object corresponding to $X\in A$.
\item Given $\tilde{X},\tilde{Y}\in A\ast G$, the morphism chain complex is given by
\[ \on{Map}_{A\ast G}(\tilde{X},\tilde{Y})=\coprod_{g\in G}\on{Map}_{A}(g.{X},{Y})\,.\]
Given a morphism $a\colon g.X\to Y$ in $A$, we write $(g,a)\colon \tilde{X}\to \tilde{Y}$ for the corresponding morphism in $A\ast G$. Every morphism in $A\ast G$ can be uniquely written as a sum $\sum_{g\in G}(g,a_g)$, with finitely many non-zero summands. 
\item the composition map 
\[ (\mhyphen)\circ (\mhyphen)\colon\on{Map}_{A\ast G}(\tilde{X},\tilde{Y})\times  \on{Map}_{A\ast G}(\tilde{Y},\tilde{Z})\longrightarrow \on{Map}_{A\ast G}(\tilde{X},\tilde{Z})\]
is defined on generators by $(g_2,b)\circ (g_1,a)\coloneqq (g_2g_1, b\circ (g_2.a))$. 
\end{itemize}

There is an apparent dg-functor $F_A\colon A \to A\ast G$, given by the assignments $X\mapsto \tilde{X}$ on objects and $a\mapsto (e,a)$ on morphisms, where $e\in G$ is the unit element. 

We can define a $G$-action $\rho_{A\ast G}\colon BG\to \on{dgCat}_k$ on $A\ast G$ by letting $h\in G$ act as $h.\tilde{X}=\widetilde{h.X}$ and $h.(g,a)=(hgh^{-1},h.a)$. Indeed, we find:
\begin{align*}
h.\left((g_2,b)\circ (g_1,a)\right) & =(hg_2g_1h^{-1},h.(b\circ (g_2.a)))\\
&  = (hg_2h^{-1},h.b)\circ (hg_1h^{-1},h.a)= h.(g_2,b)\circ h.(g_1,a) 
\end{align*} 

The dg functor $A\to A\ast G$ is clearly $G$-equivariant, which we can express as follows:

\begin{lemma}\label{lem:Gactionstrafo}
The dg functor $F_A \colon A\to A\ast G$ extends to a morphism $\rho_{A}\to \rho_{A\ast G}$ in $\on{Fun}(BG,\on{dgCat}_k)$.
\end{lemma}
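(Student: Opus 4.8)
The plan is to unwind what a morphism in the functor $1$-category $\on{Fun}(BG,\on{dgCat}_k)$ amounts to. Since $BG$ has a single object $\ast$ with endomorphism monoid $G$, a morphism $\eta\colon \rho_A\to \rho_{A\ast G}$ consists only of its component at $\ast$, namely a dg-functor $\eta_\ast\colon A=\rho_A(\ast)\to \rho_{A\ast G}(\ast)=A\ast G$, together with the requirement that the naturality square indexed by each morphism $g\colon \ast\to \ast$ of $BG$ commutes. Concretely, the data and condition reduce to choosing $\eta_\ast\coloneqq F_A$, which is already a dg-functor, and verifying the equality of dg-functors
\[ F_A\circ \rho_A(g)=\rho_{A\ast G}(g)\circ F_A\colon A\longrightarrow A\ast G \]
for every $g\in G$.

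I would then check this equality by evaluating both composites on objects and on additive generators of the morphism complexes, which suffices by $k$-linearity. On an object $X$, the left side yields $F_A(g.X)=\widetilde{g.X}$ and the right side yields $g.\tilde X=\widetilde{g.X}$, by the definition of $\rho_{A\ast G}$ on objects. On a morphism $a\colon X\to Y$ of $A$, the left side is $F_A(g.a)=(e,g.a)$, while the right side is $\rho_{A\ast G}(g)\big((e,a)\big)=(geg^{-1},g.a)=(e,g.a)$, using the formula $h.(g',a')=(hg'h^{-1},h.a')$ at $g'=e$. Both composites therefore agree.

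The only point that needs attention is the conjugation $hg'h^{-1}$ in the definition of the $G$-action on $A\ast G$, which in principle can move a morphism between different $G$-components. What makes the verification go through is that $F_A$ lands entirely in the $e$-indexed summand $\on{Map}_A(X,e.Y)$ of each morphism complex, and that $e$ is fixed under conjugation; hence $\rho_{A\ast G}(g)$ preserves this summand and acts on it simply by the original action $a\mapsto g.a$, matching $F_A\circ\rho_A(g)$. Since $BG$ is an ordinary $1$-category, there is no higher coherence to impose beyond these commuting squares, so once they are established the natural transformation $\rho_A\to\rho_{A\ast G}$ extending $F_A$ is complete.
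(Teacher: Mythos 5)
Your proof is correct and coincides with the paper's (implicit) argument: the paper states this lemma as an immediate observation after defining the $G$-action on $A\ast G$, and your verification of the strict naturality squares over $BG$ --- in particular that $\rho_{A\ast G}(g)\bigl((e,a)\bigr)=(geg^{-1},g.a)=(e,g.a)$ matches $F_A(g.a)$ --- is exactly the routine check left to the reader. Your closing remark that no higher coherence is needed because $\on{Fun}(BG,\on{dgCat}_k)$ is a strict $1$-categorical functor category correctly identifies why the pointwise squares suffice.
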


\begin{definition}\label{def:reducedskewgroupcat}
We choose for every $G$-orbit $[X]$ of objects in $A$ an arbitrary representative $X\in A$. The dg-category $(A\ast G)^{\on{red}}$ is defined as the full dg-subcategory of $A\ast G$ on the objects of the form $\tilde{X}$, with $X$ a chosen representative.
\end{definition}

We note that the inclusion $(A\ast G)^{\on{red}}\subset A\ast G$ is an equivalence of dg-categories.

\begin{lemma}\label{lem:trivialG-action}
Suppose that the action of $G$ is free on the set of objects of $A$. Then the $G$-action on $(A\ast G)^{\on{red}}$ induced by the equivalence of dg-categories $(A\ast G)^{\on{red}}\simeq A\ast G$ is trivial.
\end{lemma}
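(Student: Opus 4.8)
The plan is to make the equivalence $(A\ast G)^{\on{red}}\simeq A\ast G$ fully explicit and then compute the transported action by hand. Since $G$ acts freely on objects, every object of $A$ is uniquely of the form $g.X$ with $X$ a chosen representative and $g\in G$; this is the only place the freeness hypothesis enters, and it is essential. I would first record, for every object $X$ and every $g\in G$, the morphism $(g^{-1},\on{id}_X)\colon \tilde X\to \widetilde{g.X}$ in $A\ast G$, and check via the composition formula $(g_1,a)\circ(g_2,b)=(g_1g_2,a\circ(g_1.b))$ that it is an isomorphism with inverse $(g,\on{id}_{g.X})$. These canonical isomorphisms let me fix a quasi-inverse $\pi\colon A\ast G\to (A\ast G)^{\on{red}}$ to the inclusion $\iota$: on objects $\pi(\widetilde{g.X})=\tilde X$, and the comparison isomorphism $\epsilon\colon \iota\pi\Rightarrow \on{id}$ has components $\epsilon_{\widetilde{g.X}}=(g^{-1},\on{id}_X)$. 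Freeness guarantees that $g$ is uniquely determined by the object, so $\epsilon$ is well defined and satisfies $\epsilon_{\tilde X}=\on{id}_{\tilde X}$ on representatives.

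Next comes the key computation. The induced action on $(A\ast G)^{\on{red}}$ is the transport $\bar\rho(h)=\pi\circ \rho_{A\ast G}(h)\circ \iota$, and I would compute it directly on a morphism $(g,a)\colon \tilde X\to \tilde Y$ between representatives. Applying $\rho_{A\ast G}(h)$ gives $(hgh^{-1},h.a)\colon \widetilde{h.X}\to \widetilde{h.Y}$, and conjugating by the comparison isomorphisms $\epsilon_{\widetilde{h.X}}=(h^{-1},\on{id}_X)$ and $\epsilon_{\widetilde{h.Y}}=(h^{-1},\on{id}_Y)$ collapses, by a two-line application of the composition formula (in which the conjugation $g\mapsto hgh^{-1}$, $a\mapsto h.a$ defining the action is exactly undone), back to $(g,a)$. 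Thus each $\bar\rho(h)$ is equal, on the nose, to the identity dg-functor of $(A\ast G)^{\on{red}}$. The same formula $(h^{-1},\on{id}_X)$ moreover defines, on all of $A\ast G$, a natural isomorphism $\eta^h\colon \on{id}\Rightarrow \rho_{A\ast G}(h)$; verifying its naturality against a general morphism $(g,a)$ is the same cancellation.

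Finally, I address coherence, which is the real content. The induced action is a functor $\bar\rho\colon BG\to N(\on{dgCat}_k)[M^{-1}]$, and the lemma asserts that $\bar\rho$ is equivalent to the constant functor, not merely that each $\bar\rho(h)$ is an identity. Since $\pi$ is only a quasi-inverse, $\bar\rho$ is a priori a pseudofunctor and its triviality must be witnessed coherently. The main step is to check the cocycle condition relating $\eta^{h_1h_2}$ to the composite of $\eta^{h_1}$ with the whiskering of $\eta^{h_2}$ by $\rho_{A\ast G}(h_1)$, which I would verify by evaluating both sides at an object $X$ and observing that each reduces to $((h_1h_2)^{-1},\on{id}_X)$. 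This identity exhibits $\{\eta^h\}$ as a trivialization of $\rho_{A\ast G}$, hence of the transported action $\bar\rho$ on $(A\ast G)^{\on{red}}$. I expect the only genuine obstacle to be bookkeeping: keeping the (diagrammatic) composition order straight, and being careful that freeness is precisely what makes the comparison isomorphisms $\epsilon$ — and therefore the whole coherent trivialization — unambiguous, since for an object with nontrivial stabilizer the morphism $(h^{-1},\on{id}_X)$ is a nonidentity automorphism and no consistent choice of $\epsilon$ exists.
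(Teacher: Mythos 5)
Your proof is correct and takes essentially the same route as the paper: the paper's proof likewise transports the action of $h$ through the canonical isomorphisms $(h^{-1},\on{id}_X)\colon \tilde X\to \widetilde{h.X}$ and checks by the same composition-formula computation that $(g,a)$ is sent back to $(g,a)$ on the nose, with freeness entering exactly where you say, namely in making the comparison isomorphisms (and hence the quasi-inverse $\pi$) unambiguous. Your extra verification of the cocycle condition for the $\eta^h$ is a coherence point the paper leaves implicit, and it is a welcome supplement rather than a deviation in method.
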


\begin{proof}
We unravel the induced $G$-action on $(A \ast G)^{\on{red}}$. Fix $h\in G$. The action of $h$ is given by the composite
\[
(A\ast G)^{\on{red}}\simeq A\ast G \xlongrightarrow{\rho_{A\ast G}(h)} A\ast G \simeq (A\ast G)^{\on{red}}\,.
\]
This action clearly acts as the identity on objects. Given a morphism $(g,a)\colon \tilde{X}\to \tilde{Y}$ in $(A\ast G)^{\on{red}}$, the action of $h$ is given by the composite
\[
\tilde{X}\xlongrightarrow{(h,\on{id}_X)} \widetilde{h.X}\xlongrightarrow{(hgh^{-1},h.a)} \widetilde{h.Y} \xlongrightarrow{(h^{-1},\on{id}_Y)} \tilde{Y}
\]
which is by definition again given by $(g,a)$. 
\end{proof}

We can always arrange the $G$-action on $A$ to be free on the set of objects:

\begin{lemma}\label{lem:freeaction}
The dg-category $A$ with the group action by $G$ is equivalent in $\on{Fun}(BG,\on{dgCat}_k)$ to a dg-category $A'$ with a $G$-action that is free on its set of objects.
\end{lemma}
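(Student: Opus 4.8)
The plan is to replace $A$ by an equivalent dg-category $A'$ whose set of objects is "fattened" so that $G$ acts freely on it, while retaining the same morphism complexes up to equivalence. The standard device is to take a mapping-cylinder–type enlargement: form $A' \coloneqq A \times EG$, where $EG$ denotes the contractible groupoid with object set $G$ and a unique (iso)morphism between any two objects. Concretely, I would let the objects of $A'$ be pairs $(X,g)$ with $X \in A$ and $g \in G$, and set
\[
\on{Map}_{A'}((X,g),(Y,h)) = \on{Map}_A(X,Y)\,,
\]
with the evident composition inherited from $A$ (the $G$-labels contribute nothing to the hom-complexes, only to the object set). The projection $(X,g)\mapsto X$ then gives a dg-functor $A' \to A$ that is fully faithful and essentially surjective, hence a quasi-equivalence, so $A'$ and $A$ are equivalent in $\on{dgCat}_k$.

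Next I would equip $A'$ with a $G$-action lifting the given action $\rho_A$. I would let $h \in G$ act by $h.(X,g) = (h.X,\, hg)$ on objects and by the given action $h.a$ on the morphism complex $\on{Map}_A(X,Y)\cong \on{Map}_{A'}((X,g),(Y,g'))$. The key point is that this action is \emph{free on the object set}: if $h.(X,g)=(X,g)$ then in particular $hg = g$, forcing $h = e$. Thus no nontrivial group element fixes any object of $A'$, as required.

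The remaining task is to verify that the projection $p\colon A' \to A$ is not merely a quasi-equivalence but in fact upgrades to an equivalence $\rho_{A'} \to \rho_A$ in $\on{Fun}(BG,\on{dgCat}_k)$, i.e.\ that it is $G$-equivariant and that it is an equivalence after passing to the relevant localized or homotopy category of the functor category. Equivariance is immediate from the formulas: $p(h.(X,g)) = p(h.X,hg) = h.X = h.p(X,g)$, and on morphisms $p$ intertwines the two actions since both are given by $a \mapsto h.a$. To see it is an equivalence of diagrams, I would invoke that a levelwise quasi-equivalence of $BG$-diagrams which is compatible with the actions represents an equivalence in $\on{Fun}(BG,\on{dgCat}_k)$ with respect to the (Morita or quasi-equivalence) model structure; since $BG$ has a single object, a natural transformation of $G$-equivariant dg-functors is an equivalence precisely when its single component $p$ is a quasi-equivalence, which we have established.

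The main obstacle, and the step deserving the most care, is the equivariance and equivalence check in the functor category $\on{Fun}(BG,\on{dgCat}_k)$: one must confirm that the enlargement is strictly $G$-equivariant (so that $\rho_{A'}$ is genuinely a functor out of $BG$ and $p$ a strict natural transformation) rather than merely equivariant up to coherent homotopy, and that a pointwise quasi-equivalence of such strict $BG$-diagrams does yield an equivalence in the homotopy category of $\on{Fun}(BG,\on{dgCat}_k)$. The freeness and the strict functoriality both hinge on the choice $h.(X,g)=(h.X,hg)$, where the left-multiplication on the $G$-coordinate is what simultaneously destroys all fixed points and remains a strict action because left multiplication is a strict $G$-action on $G$ itself.
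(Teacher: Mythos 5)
Your proof is correct and follows essentially the same strategy as the paper: multiply the object set by $G$, let $G$ act by left translation on the $G$-coordinate (which alone guarantees freeness), and observe that the evident projection is a strictly equivariant quasi-equivalence. The paper's variant first chooses orbit representatives $R$ and puts $\on{ob}(A')=R\times G$ with hom complexes $\on{Map}_A(g.x,h.y)$ and action $h.(x,g)=(x,hg)$, but this is isomorphic to the full $G$-stable subcategory of your $A\otimes kEG$ on the objects $(g.x,g)$, so the two constructions differ only cosmetically.
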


\begin{proof}
The set $\on{ob}(A)$ of objects of $A$ splits into the orbits of the $G$-action. We choose a set of representatives $R\subset \on{ob}(A)$ of the orbits $\on{ob}(A)/G$. We define the set of objects $\on{ob}(A')$ of $A'$ to be pairs $(x,g)$ with $x\in R$ and $g\in G$. The morphism complexes are defined as $\on{Map}_{A'}((x,g),(y,h))=\on{Map}_A(g.x,h.y)$ with composition as in $A$. The apparent dg-functor $\pi\colon A'\to A$, given on objects by $(x,g)\to g.x$ is fully faithful and essentially surjective, and hence an equivalence of dg-categories. Using the $G$-action on $A$, we define a $G$-action on $A'$, given on objects by $h.(x,g)=(x,hg)$ and on morphisms complexes as for $A$. The equivalence of dg-categories $\pi$ is clearly $G$-equivariant.
\end{proof}

Let $A'$ be as in \Cref{lem:freeaction}. Then $(A'\ast G)^{\on{red}}$ is by \Cref{lem:trivialG-action} the tip of a cocone under the diagram $BG\to \on{dgCat}_k$ describing the $G$-action on $A'$. Passing to derived $\infty$-categories, we see that 
\[ \D((A'\ast G)^{\on{red}})\simeq \D(A'\ast G)\simeq \D(A\ast G)\]
defines the tip of a cocone under the diagram 
\[ \rho_{\D(A)}\colon BG\to \on{LinCat}_{\on{Mod}_k}
\]
describing the $G$-action on $\D(A)$. We denote the colimit of the functor $\rho_{\D(A)}$ by $\D(A)_G$. By its universal property, there is an induced functor 
\begin{equation}\label{eq:infty_colim_to_derived_skew_group} \zeta\colon  \D(A)_G\longrightarrow \D(A\ast G)\end{equation}

\begin{proposition}\label{prop:skewgroupcatiscolim}
The functor $\zeta$ from \eqref{eq:infty_colim_to_derived_skew_group} is an equivalence of $k$-linear $\infty$-categories.
\end{proposition}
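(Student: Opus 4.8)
The plan is to prove that $\zeta$ is an equivalence by checking that it is a colimit-preserving $k$-linear functor carrying a set of compact generators of $\D(A)_G$ to a set of compact generators of $\D(A\ast G)$ and inducing equivalences on the morphism complexes between them; a colimit-preserving functor between compactly generated stable $\infty$-categories with these two properties is automatically an equivalence.

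First I would organize both sides as monadic adjunctions over $\D(A)$. On the quotient side, \Cref{lem:colimastensorproduct}(2) identifies the colimit cocone functor $\iota\colon \D(A)\to \D(A)_G$, and \Cref{lem:monad} shows that its right adjoint $U=\psi^R\otimes_{\on{Mod}_k^{\amalg G}}\D(A)$ is monadic with associated monad $\coprod_{g\in G}\rho_{\D(A)}(g)$. On the skew group side, the dg-functor $F_A\colon A\to A\ast G$ induces the cocone functor $\D(A)\to \D(A\ast G)$, which is the induction functor $\on{ind}=\D(F_A)$, left adjoint to the restriction functor $\on{res}$. Since $F_A$ is a bijection on objects, $\on{res}$ is conservative, and being colimit-preserving it is monadic by \cite[Thm.~4.7.3.5]{HA}. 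By the universal property defining $\zeta$, we have $\zeta\circ \iota\simeq \on{ind}$, and $\zeta$ is colimit-preserving since it is a morphism in $\on{LinCat}_{\on{Mod}_k}$.

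Next I would pin down generators. The representables $h_X$, for $X\in \on{ob}(A)$, compactly generate $\D(A)$; because $U$ is conservative and colimit-preserving, $\iota$ preserves compact objects and the family $\{\iota(h_X)\}$ generates $\D(A)_G$, while $\{\on{ind}(h_X)\}=\{h_{\tilde X}\}$ compactly generates $\D(A\ast G)$. Since $\zeta(\iota(h_X))\simeq \on{ind}(h_X)=h_{\tilde X}$, the functor $\zeta$ takes generators to generators, so its image generates the target. It then remains to compare morphism complexes. Using $\iota\dashv U$, \Cref{lem:monad}(2), the identity $\rho_{\D(A)}(g)(h_Y)\simeq h_{g.Y}$, compactness of $h_X$ and the Yoneda lemma, I obtain
\begin{align*}
\on{Map}_{\D(A)_G}(\iota h_X,\iota h_Y)
&\simeq \on{Map}_{\D(A)}\Big(h_X,\coprod_{g\in G}\rho_{\D(A)}(g)\,h_Y\Big)\\
&\simeq \coprod_{g\in G}\on{Map}_A(X,g.Y),
\end{align*}
whereas the definition of $A\ast G$ together with Yoneda gives $\on{Map}_{\D(A\ast G)}(h_{\tilde X},h_{\tilde Y})=\coprod_{g\in G}\on{Map}_A(X,g.Y)$ directly. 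One then checks that the map induced by $\zeta$ is the identity under these two identifications, equivalently that the comparison of monads $\coprod_{g}\rho_{\D(A)}(g)\to \on{res}\circ\on{ind}$ (both sending $h_Y$ to $\coprod_g h_{g.Y}$) is the evident equivalence.

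I expect this last compatibility to be the main obstacle. Computing both morphism complexes abstractly is immediate, but to conclude full faithfulness of $\zeta$ on generators one must verify that the equivalence $\zeta\circ\iota\simeq \on{ind}$ is compatible with the units and counits of the two adjunctions, so that $\zeta$ really induces the identity rather than some reindexing automorphism of the coproduct over $G$. This requires tracing through the cocone data used to define $\zeta$ (via \Cref{lem:freeaction} and \Cref{lem:trivialG-action}) and fixing the conventions for the $G$-action so that the generators $\iota(h_X)$ and $h_{\tilde X}$ are matched coherently. Once this is in place, $\zeta$ is fully faithful on a set of compact generators whose image generates, and being colimit-preserving it is an equivalence.
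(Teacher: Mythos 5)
Your strategy is the same as the paper's: reduce to showing that the colimit-preserving $k$-linear functor $\zeta$ matches compact generators and is fully faithful on them, use monadicity on both sides to get generation (\Cref{lem:monad} for $\D(A)_G$; restriction along the object-bijective/essentially surjective $F_A$ for $\D(A\ast G)$), and compute both morphism objects as $\coprod_{g\in G}\on{Map}_A(X,g.Y)$. All of that is set up correctly and agrees with the paper's proof of \Cref{prop:skewgroupcatiscolim}.

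However, you stop exactly at the crux, which you honestly flag but do not resolve: you never show that the particular map $\on{Mor}_{\D(A)_G}(F(X),F(Y))\to \on{Mor}_{\D(A\ast G)}(\tilde{X},\tilde{Y})$ induced by $\zeta$ is an equivalence, only that the two sides are abstractly equivalent, and your proposal ends with ``once this is in place.'' The paper closes this gap with a short factorization argument you could have completed. By commutativity of the triangle $\zeta\circ F\simeq \D(F_A)$, the map restricts to an equivalence on the $e$-summand $\on{Map}_A(X,Y)$. For the remaining summands, every morphism in the $g$-summand factors as a morphism coming from $A$ composed with a structural equivalence: on the quotient side the cocone datum $F(X)\simeq F(g.X)$, on the skew side the equivalence $\tilde{X}\simeq \widetilde{g.X}$ of the form $(g,\on{id})$; and $\zeta$ carries the first family to the second by construction --- this is precisely where \Cref{lem:freeaction} and \Cref{lem:trivialG-action} do real work, since $\zeta$ is induced by a \emph{strict} cocone with tip $(A'\ast G)^{\on{red}}$, so the matching holds on the nose rather than up to unspecified coherence. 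Since these equivalences together with the $A$-morphisms generate all morphisms on both sides, the map is a summand-wise equivalence. Note also that your dichotomy ``identity versus reindexing automorphism'' is slightly off target: a reindexing automorphism of the coproduct would still be an equivalence and would be harmless; what must be excluded is a map that collapses or mixes summands non-invertibly, and the factorization argument rules this out without ever needing the map to be the literal identity under the chosen identifications.
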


\begin{remark}\label{rem:htpycolim}
The $\infty$-category underlying the model category $\on{dgCat}_k$ with the Morita model structure is equivalent to the $\infty$-category $\on{LinCat}_k^{\on{cpt-gen}}$ of $k$-linear, compactly generated, presentable, and stable $\infty$-categories as well as compact objects preserving, left adjoint functors, see \cite{Coh13}. In particular, homotopy colimits in $\on{dgCat}_k$ are described by $\infty$-categorical colimits in $\on{LinCat}_k^{\on{cpt-gen}}$, see \cite[Rem.~7.9.10]{Cis}. Note further that the forgetful functor $\on{LinCat}_k^{\on{cpt-gen}}\to \on{LinCat}_k$ preserves and reflects colimits. Thus, by \Cref{prop:skewgroupcatiscolim}, $\on{Perf}(A\ast G)$ describes the homotopy colimit of $\rho_A$. 
\end{remark}

\begin{proof}[Proof of \Cref{prop:skewgroupcatiscolim}]
By construction, there is a commutative diagram of compact objects preserving $k$-linear functors,
\begin{equation}\label{eq:diagramzeta}
\begin{tikzcd}
                             & \D(A) \arrow[rd, "\D(F_A)"] \arrow[ld, "F"'] &              \\
\D(A)_G \arrow[rr, "\zeta"] &                                                & \D(A\ast G)
\end{tikzcd}
\end{equation}
where $F$ denotes the functor contained in the colimit diagram. Note that $\D(A)$ is generated under colimits by the objects arising from the objects in $A$, which are automatically compact. Similarly, the images of the objects in $A$ under $F$ and $\D(F_A)$ compactly generate, using that $F$ is monadic by \Cref{lem:monad} and \cite[Prop.~4.7.3.14]{HA}, respectively, that $F_A$ is essentially surjective. It hence suffices to show that for all $X,Y\in A$, the morphism
\begin{equation}\label{eq:morzeta}
\on{Mor}_{\D(A)_G}(F(X),F(Y))\to \on{Mor}_{\D(A\ast G)}(\D(F_A)(X),\D(F_\C)(Y))
\end{equation}
is an equivalence. We have equivalences 
\begin{equation}\label{eq:MorCG1}
\on{Mor}_{\D(A)_G}(F(X),F(Y))\simeq \on{Mor}_{\D(A)}(X,F^RF(Y))\simeq \on{Mor}_{\D(A)}(X,\coprod_{g\in G}g.Y)\simeq \coprod_{g\in G}\on{Map}_A(g.X,Y)\,,
\end{equation}
where the last equivalence uses that $X\in \D(A)$ is compact, 
and  
\begin{equation}\label{eq:MorCG2}
\on{Mor}_{\D(A\ast G)}(\D(F_A)(X),\D(F_A)(Y))\simeq \on{Map}_{A\ast G}(\tilde{X},\tilde{Y})=\coprod_{g\in G}\on{Map}_A(g.X,Y)\,.
\end{equation}
By the commutativity of the diagram \eqref{eq:diagramzeta}, the morphism \eqref{eq:morzeta} restricts to an equivalence on the $\on{Map}_{A}(X,Y)$-summands of \eqref{eq:MorCG1} and \eqref{eq:MorCG2}. The morphism \eqref{eq:morzeta} also restricts to equivalences on the other summands which follows from combining the following two observations: Firstly, for every $g\in G$, the equivalence $F(g.X)\simeq F(X)$ is mapped to an equivalence $\widetilde{g.X}\simeq \D(F_A)(g.X)\simeq \tilde{X}=\D(F_A)(X)$. Secondly, together with the morphisms coming from $A$, these equivalences generate all morphisms between $F(X),F(Y)$ in $\D(A)_G$ and between $\D(F_A)(X),\D(F_A)(Y)$ in $\D(A\ast G)$. 
\end{proof}

\begin{remark}
In the case that $A$ is an $A_\infty$-category equipped with a strict group action as in \cite{OZ22,AP24}, the above construction easily adapts to show that the skew group $A_\infty$-category $A\ast G$ of \cite{OZ22,AP24} is equivalent to the $\infty$-categorical colimit over $BG$ in the $\infty$-category $\on{Cat}_{A_\infty}[M^{-1}]$ of $A_\infty$-categories localized at Morita equivalences. For this, we use the equivalences of $\infty$-categories $\on{Cat}_{A_\infty}[M^{-1}]\simeq \on{dgCat}_k[M^{-1}]\simeq \on{LinCat}_k^{\on{cpt-gen}}$, see \cite{Pas24,COS24} and \cite{Coh13}.
\end{remark}

\begin{proof}[Proof of \Cref{cor:modAG}.]
The derived $\infty$-categories $\D(A)$ and $\D(AG)$ of the algebras are equivalent to the module $\infty$-categories $\on{RMod}_A$ and $\on{RMod}_{AG}$, respectively. Using \Cref{thm:colimBG} and \Cref{lem:limit}, we thus have an equivalence of $\infty$-categories:
\[
\D(AG)\simeq \on{lim}_{BG}\D(A)=\D(A)^G\,.
\] 
By \Cref{rem:equivariant=lim}, $N(\on{Mod}(A)^G)$ embeds fully faithfully into $\D(A)^{G}$. The nerve $N(\on{Mod}(AG))$ also embeds fully faithfully into $\D(AG)$ as the standard heart. It remains to note that these two full subcategories are identified under the above equivalence. 

An object in $\D(AG)$ lies in the heart if and only if its image under the monadic functor $\on{RHom}(AG,\mhyphen)\colon \D(AG)\to \D(A)$ lies in the heart of $\D(A)$. Under the equivalence with the limit $\D(A)^G$, this functor corresponds to the functor in the limit cone. An object in $\D(A)^G$ corresponds to a coCartesian section of the Grothendieck construction over $BG$ \cite[\href{https://kerodon.net/tag/05RX}{Prop.~05RX}]{Ker}, and lies in the image of $N(\on{Mod}(A)^G)$ if and only if its evaluation at the unique object $\ast \in BG$ lies in $N(\on{Mod}(A))\subset \D(A)$. The evaluation functor at $\ast \in BG$ on coCartesian sections also describes the functor $\D(A)^G\to \D(A)$ in the limit cone. Hence, both full subcategories have the same essential images.  
\end{proof}

\begin{remark}\label{rem:equivariant=lim}
We describe the relation of the $1$-category of equivariant objects with limits in the $\infty$-category $\on{Cat}_\infty$ of $\infty$-categories. Let $\on{Cat}$ denote the $1$-category of $1$-categories. Consider the adjunction 
\[ h(\mhyphen)\colon  \on{Set}_\Delta\leftrightarrow \on{Cat}\noloc N(\mhyphen)\] 
between the homotopy category functor and the simplicial nerve functor. Note that $N(\mhyphen)$ is fully faithful. This adjunction is a Quillen adjunction with respect to the Joyal model structure on $\on{Set}_\Delta$ and the standard model structure on $\on{Cat}$. We hence obtain an adjunction between $\infty$-categories $h(\mhyphen)\colon \on{Cat}_\infty \leftrightarrow \on{Cat}[W^{-1}]\noloc N(\mhyphen)$, where $W$ denotes the collection of equivalences of $1$-categories. The derived functor $N(\mhyphen)$ is also fully faithful. Since it preserves $\infty$-categorical limits, and we see that limits in $\on{Cat}_\infty$ of diagrams valued in nerves of $1$-categories are equivalent to nerves of $1$-categories, and hence determined by their homotopy categories. Given a diagram $\rho\colon BG\to \on{Cat}$, $\ast \to C$, one finds that the homotopy category of the limit of $N\circ \rho\colon BG\to \on{Cat}_\infty$ is equivalent to the category $C^G$ of $G$-equivariant objects of $C$, which can be seen using the explicit description of limits via coCartesian sections of the Grothendieck construction \cite[\href{https://kerodon.net/tag/05RX}{Prop.~05RX}]{Ker}. Thus, we have 
\[ N(C^G)\simeq \on{lim}_{BG} N\circ \rho \in \on{Cat}[W^{-1}]\subset \on{Cat}_\infty
\,.\]
\end{remark}

\subsection{Skew group dg-categories as homotopy colimits of cofibrant diagrams}\label{subsec:homotopycolim}

In this section, we describe an alternative way to relate skew group dg-categories with group quotients using purely model categorical techniques.

We consider the functor category $\on{Fun}(BG,\on{dgCat}_k)$ as equipped with the projective model structure inherited from the quasi-equivalence model structure on $\on{dgCat}_k$. The weak equivalences are thus the pointwise quasi-equivalences and the fibrations are the pointwise fibrations. 

\begin{lemma}\label{lem:cofibrant}
Consider a functor $\rho_A\colon BG\to\on{dgCat}_k$ describing the action of a group $G$ on a dg-category $A$. If $A$ is cofibrant and the action free on the set of objects of $A$, then $\rho_A\in \on{Fun}(BG,\on{dgCat}_k)$ is a cofibrant object with respect to the projective model structure. 
\end{lemma}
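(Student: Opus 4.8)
The plan is to verify cofibrancy via the left lifting property against trivial fibrations in the projective model structure, reducing everything to a single $G$-equivariant lifting problem and then solving it by transporting choices along the free action. First I would record the formal input: the evaluation functor $\on{ev}_\ast\colon \on{Fun}(BG,\on{dgCat}_k)\to \on{dgCat}_k$ is right Quillen, since fibrations and weak equivalences in the projective model structure are detected pointwise, so its left adjoint $\on{Free}_G(-)=\coprod_{g\in G}(-)$ (with $G$ permuting the copies by translation) is left Quillen, and the projective model structure is cofibrantly generated with generating cofibrations $\{\on{Free}_G(i)\}$, where $i$ ranges over Tabuada's generating cofibrations of $\on{dgCat}_k$: the map $\emptyset\to \underline{k}$ adjoining one object, and the family adjoining a generating morphism between two objects together with its boundary. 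Trivial fibrations in the projective structure are exactly the pointwise trivial fibrations. Hence $\rho_A$ is cofibrant if and only if, for every $G$-equivariant dg-functor $p\colon \mathcal E\to\mathcal F$ that is a trivial fibration of dg-categories and every $G$-equivariant $f\colon A\to\mathcal F$, there is a $G$-equivariant lift $\tilde f\colon A\to\mathcal E$ with $p\tilde f=f$.

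Next I would solve the lifting problem by exploiting freeness. On objects, choose representatives $R\subset\on{ob}(A)$ of the $G$-orbits; since $p$ is surjective on objects, pick for each $x\in R$ some $e_x\in\on{ob}(\mathcal E)$ with $p(e_x)=f(x)$, and set $\tilde f(g.x)\coloneqq g.e_x$. Freeness makes this well defined (each object is $g.x$ for a unique pair $(g,x)$), manifestly $G$-equivariant, and compatible with $f$ because $p$ and $f$ are equivariant. The point I would stress is that freeness on objects upgrades to freeness on the set of \emph{slots} $\on{ob}(A)\times\on{ob}(A)$ on which the hom-complexes live: the stabilizer of $(a,a')$ is $\on{Stab}(a)\cap\on{Stab}(a')=\{e\}$. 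Thus the indexing data of the generating morphisms of $A$ carries a free $G$-action, and I would use cofibrancy of $A$ to choose a quasi-free (semifree) presentation whose generating morphisms and triangular differential are themselves $G$-equivariant: pick generators on a fundamental domain of slots (one slot per orbit) and transport them along the free action. Attaching these generators orbit by orbit exhibits $\rho_A$ as a transfinite composite of pushouts of cells $\on{Free}_G(i)$ — a free orbit of objects for each $x\in R$, then a free orbit of morphisms for each orbit of generators — possibly up to an equivariant retract, which proves projective cofibrancy. Equivalently, in the lifting formulation one lifts the image of each new generator on the fundamental domain (using that $p$ is a surjective quasi-isomorphism on hom-complexes) and extends by the action.

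The hard part will be showing that the cofibrant (quasi-free) structure of $A$ can be chosen $G$-equivariantly, and this is precisely where the freeness hypothesis is indispensable. If some slot $(a,a')$ had nontrivial stabilizer, a generating morphism there would have to be chosen compatibly with a nontrivial group action on a single hom-complex, which over an arbitrary field and for infinite $G$ cannot be arranged by averaging; freeness removes this obstruction entirely, since every orbit of generators (and of the associated cells) is free, so a choice on a fundamental domain extends uniquely and without conflict. The remaining verifications — that the equivariant extension respects composition, units and differentials — then follow formally, because composition and $d$ only ever relate generators within this free orbit structure, and equivariance of the transport together with the triangularity of $d$ makes the dg-functor axioms hold stage by stage along the filtration by generators. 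The one bookkeeping subtlety, which I would handle by the usual filtered induction, is that for a general cofibrant $A$ the quasi-free structure exists only up to retract, so one must also check that the retraction can be transported equivariantly along the free action on slots.
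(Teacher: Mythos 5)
Your reduction of cofibrancy to an equivariant lifting problem against pointwise trivial fibrations is correct, and your object-level step (lift on orbit representatives, extend by freeness) is fine and matches the paper. The genuine gap sits at the centre of your argument: you need the cofibrant dg-category $A$ to admit a semifree presentation whose set of generating morphisms is $G$-stable, so that $\rho_A$ becomes a transfinite composite of pushouts of cells $\coprod_{g\in G}i$, and for a general cofibrant $A$ you defer to an ``equivariant retract.'' Neither is justified. Cofibrancy only exhibits $A$ as a retract of a semifree dg-category $Q$, and $Q$ carries no $G$-action a priori; freeness of $G$ on slots does make the hom-complexes a freely permuted family, but it does not produce a $G$-stable system of free generators compatible with composition and with the triangularity of the differential --- rechoosing generators equivariantly is essentially the content of the lemma, not a bookkeeping step, and ``transporting the retraction along the free action on slots'' is not a construction. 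Even the natural repair --- take a functorial small-object-argument replacement $Q(A)\to A$, which then inherits a strict $G$-action whose cells are freely permuted, so that $\rho_{Q(A)}$ is an honest projective cell object --- only shows that $\rho_A$ is \emph{weakly equivalent} to a projectively cofibrant object; to get cofibrancy of $\rho_A$ itself you would need an equivariant section of the pointwise trivial fibration $Q(A)\to A$, which is exactly the lifting property you are trying to prove. As proposed, the retract step is circular.

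The paper's proof takes a different and much shorter route that avoids cell structures entirely: given an equivariant acyclic fibration $\pi\colon B\to C$ and an equivariant $F\colon A\to C$, it uses cofibrancy of $A$ in $\on{dgCat}_k$ once, to choose a plain lift $\tilde F$, arranges $\tilde F$ to be $G$-equivariant on objects (adjusting orbitwise), and then equivariantizes on morphisms by transport: for $\alpha\in\on{Map}_A(g_1.X,g_2.Y)$ with $X,Y$ chosen orbit representatives, it sets $F'(\alpha)=g_1.\tilde F(g_1^{-1}.\alpha)$, freeness on objects making $g_1$, and hence $F'$, well defined; equivariance and $\pi\circ F'=F$ are then direct computations. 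That said, your instinct that naive pointwise transport is dangerous is pointing at something real: the paper does not verify that $F'$ respects composition, and for an arbitrary lift $\tilde F$ it need not --- one needs $g_1.\tilde F(g_1^{-1}.\beta)=g_2.\tilde F(g_2^{-1}.\beta)$ for $\beta$ with source $g_2.Y$, i.e.\ equivariance of $\tilde F$ on morphisms out of representatives, which is not automatic. Closing that check requires precisely the kind of control over a generating structure that your cellular approach was trying to set up. So your proposal attacks the right difficulty but does not close it, while the paper's argument is shorter because it localizes the use of cofibrancy to a single non-equivariant lift, at the cost of leaving the functoriality of the transported lift implicit.
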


\begin{proof}
Consider an acyclic fibration $\pi\colon B\to C$ and a morphism $F\colon A\to C$ in $\on{Fun}(BG,\on{dgCat}_k)$. To prove the cofibrancy of $\rho_A$, we must find a lift $A\to B$ of $F$ along $\pi$ in $\on{Fun}(BG,\on{dgCat}_k)$.  

Since $A$ is cofibrant in $\on{dgCat}_k$, we can choose a lift $\tilde{F}$ of $F$ along $\pi$ in $\on{dgCat}_k$. We may choose the lift $\tilde{F}$ such that it is $G$-equivariant on the level of objects: we fix an object $X\in A$ in each $G$-orbit of $A$. If for $g\in G$, we have $\tilde{F}(g.X)\not =g.\tilde{F}(X)$, we choose an equivalence $\tilde{F}(g.X)\simeq g.\tilde{F}(X)$ lifting the identity $F(g.X)=g.F(X)$ and redefine $\tilde{F}(g.X)$ as $g.\tilde{F}(X)$ and change the action on morphisms using this equivalence. 

We choose a set $R$ of representatives of the $G$-orbits of the objects of $A$. We define a new dg-functor $F'\colon A\to B$ as follows:
\begin{itemize}
\item On objects, $F'$ is defined as $\tilde{F}$.
\item For $X,Y\in R$ and $g_1,g_2\in G$, we define $F'\colon \on{Map}_{A}(g_1.X,g_2.Y)\to \on{Map}_{B}(g_1.F'(X),g_2.F'(Y))$ on $\alpha\in \on{Map}_A(g_1.X,g_2.Y)$ as $g_1.\tilde{F}(g_1^{-1}.\alpha)$.
\end{itemize}

We first note that $F'$ defines a lift of $F$ along $\pi$ in $\on{dgCat}_k$: on objects that is clear and on a morphism $\alpha\in \on{Map}_A(g_1.X,g_2.Y)$, we have 
\[ \pi\circ F'(\alpha)=\pi(g_1.\tilde{F}(g^{-1}_1.\alpha))=g_1.\pi(\tilde{F}(g_1^{-1}.\alpha))=g_1.F(g_1^{-1}.\alpha)=F(\alpha)\]
using the $G$-equivariance of $F$ and $\pi$. 

Finally, we check that $F'$ is indeed $G$-equivariant, i.e.~defines the desired lift in $\on{Fun}(BG,\on{dgCat}_k)$. On objects, this was noted above. Consider a morphism $\alpha\in \on{Map}_A(g_1.X,g_2.Y)$ and $h\in G$. Then 
\[
F'(h.\alpha)=(hg_1).\tilde{F}((hg_1)^{-1}.(h.\alpha))=h.(g_1.\tilde{F}(g_1^{-1}.\alpha))=h.F'(\alpha)\,,
\] 
as desired.
\end{proof}

\begin{proposition}\label{prop:homotopycolim}
Under the assumptions of \Cref{lem:cofibrant}, the dg-category $(A\ast G)^{\on{red}}$ from \Cref{def:reducedskewgroupcat} is the tip of a (strictly commuting) homotopy colimit cocone under the functor $\rho_A$ with respect to the quasi-equivalence model structure on $\on{dgCat}_k$.
\end{proposition}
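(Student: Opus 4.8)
The plan is to use the cofibrancy of $\rho_A$ from \Cref{lem:cofibrant} to replace the homotopy colimit by an ordinary colimit, and then to identify that strict colimit with $(A\ast G)^{\on{red}}$ through its universal property. For the first step I would record that the colimit functor $\on{colim}\colon \on{Fun}(BG,\on{dgCat}_k)\to\on{dgCat}_k$ is left Quillen for the projective model structure: its right adjoint is the constant-diagram functor, which preserves fibrations and trivial fibrations since these are detected pointwise. Hence the homotopy colimit of a projectively cofibrant diagram is represented by its strict $1$-categorical colimit, equipped with the tautological strictly commuting colimit cocone. By \Cref{lem:cofibrant} the standing hypotheses make $\rho_A$ projectively cofibrant, so it remains to exhibit $(A\ast G)^{\on{red}}$, with an appropriate cocone, as the strict colimit $\on{colim}_{BG}\rho_A$.

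To produce the cocone, I would compose the morphism of diagrams $F_A\colon \rho_A\to\rho_{A\ast G}$ of \Cref{lem:Gactionstrafo} with the equivariant equivalence $A\ast G\simeq (A\ast G)^{\on{red}}$ recorded after \Cref{def:reducedskewgroupcat}. By \Cref{lem:trivialG-action} the induced $G$-action on $(A\ast G)^{\on{red}}$ is trivial, so $\rho_{(A\ast G)^{\on{red}}}$ is literally the constant diagram. The resulting map $c\colon \rho_A\to \on{const}_{(A\ast G)^{\on{red}}}$ is therefore a morphism in $\on{Fun}(BG,\on{dgCat}_k)$, i.e.\ a strictly commuting cocone, whose underlying dg-functor sends each object of $A$ to the chosen representative of its $G$-orbit.

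It then remains to check that $c$ is universal. Given any cocone $f\colon A\to D$ with $f\circ\rho_A(g)=f$, I would define $\bar f\colon (A\ast G)^{\on{red}}\to D$ by $\tilde X\mapsto f(X)$ on objects and by $\bar f(g,a)=f(a)\colon f(X)\to f(g.Y)=f(Y)$ on a morphism $(g,a)\colon\tilde X\to\tilde Y$ with $a\in\on{Map}_A(X,g.Y)$, the last identity being the cocone relation. Compatibility of $\bar f$ with the twisted composition $(g_1,a)\circ(g_2,b)=(g_1g_2,a\circ(g_1.b))$ reduces to $f(g_1.b)=f(b)$, again the cocone relation; and uniqueness follows because every morphism $(g,a)$ of $(A\ast G)^{\on{red}}$ equals $c(a)$ for $a\in\on{Map}_A(X,g.Y)$, so $c$ is already surjective on morphism complexes. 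This exhibits $(A\ast G)^{\on{red}}\cong\on{colim}_{BG}\rho_A$ as cocones and finishes the proof.

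The step I expect to be the main obstacle is exactly this last identification: colimits of dg-categories are in general unwieldy, and it is the freeness of the action, through the passage to $(A\ast G)^{\on{red}}$, that makes the strict colimit's morphism complexes computable and matches them, with the correct twisted composition, to the skew-group complexes $\coprod_{g\in G}\on{Map}_A(X,g.Y)$. The points demanding care are that $\bar f$ genuinely respects both the twisted composition and the differential, and that the cocone $c$ commutes on the nose rather than merely up to coherent homotopy; as indicated, both follow from the cocone relation together with \Cref{lem:trivialG-action}.
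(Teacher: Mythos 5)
Your proof is correct and follows the same route as the paper: the paper's own proof consists of exactly these two steps, asserting that the cocone on $(A\ast G)^{\on{red}}$ (strictly commuting by \Cref{lem:trivialG-action}) is a colimit cocone in the $1$-category $\on{dgCat}_k$, and then invoking the projective cofibrancy of $\rho_A$ from \Cref{lem:cofibrant} to upgrade the strict colimit to a homotopy colimit. You merely make explicit what the paper leaves as ``straightforward''---the left Quillen property of $\on{colim}$ and the verification of the $1$-categorical universal property via the twisted composition---and both verifications are accurate.
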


\begin{proof}
It is straightforward to see that the cocone is a colimit cocone in the $1$-category $\on{dgCat}_k$. Since the diagram $\rho_A$ is cofibrant by \Cref{lem:cofibrant}, this colimit cocone is also a homotopy colimit cocone.
\end{proof}

\section{Orbit categories}\label{sec:orbit}

In this section, we will specialize to the case where $G=\mathbb{Z}$. The arising group quotients are known as orbit categories. After a general discussion in the $\infty$-categorical context in \Cref{subsec:inftyorbitcats}, we consider non-strict $\mathbb{Z}$-action on dg-categories in \Cref{subsec:orbitdg}. Finally, we describe examples arising from periodic derived categories in \Cref{subsec:periodicex}.

\subsection{Orbit $\infty$-categories}\label{subsec:inftyorbitcats}

Let $L$ denote the simplicial set with a unique $0$-simplex $\ast$ and a unique non-degenerate $1$-simplex $1\colon \ast \to \ast$.

\begin{definition}
Let $F\colon \C\to \C$ be a $\on{Mod}_R$-linear endofunctor of a $\on{Mod}_R$-linear $\infty$-category $\C$. The $\on{Mod}_R$-linear orbit $\infty$-category $\C/F$ is defined as the colimit of the functor
\begin{equation}\label{eq:Faction} L\longrightarrow \on{LinCat}_{\on{Mod}_R},\quad \ast \mapsto \C,\quad (1\colon \ast\to \ast) \mapsto F\,.\end{equation}
\end{definition}

\begin{remark}
The orbit $\infty$-category $\C/F$ is also equivalent to the limit of the functor \eqref{eq:Faction} and hence to the $\infty$-category of coCartesian sections of the Grothendieck construction of the functor \eqref{eq:Faction}. An object of $\C/F$ thus amounts to an object $X\in \C$ together with an equivalence $X\simeq F(X)$ in $\C$. 

The lax limit of the functor \eqref{eq:Faction}, denoted $\C/^{\on{lax}}F$, is given by the $\infty$-category of all sections of the Grothendieck construction of \eqref{eq:Faction}. Its objects are given by objects $X\in \C$ together with a morphism $X\to F(X)$ in $\C$. 
\end{remark}

\begin{lemma}\label{lem:cofinal}
Let $F\colon \C\to \C$ be a $\on{Mod}_R$-linear endofunctor.
\begin{enumerate}
\item[(1)] The inclusion of simplicial sets $L\subset B\mathbb{N}, 1\mapsto 1$ is inner anodyne. Pulling back along this inclusion hence gives rise to a commutative diagram of $\infty$-categories:
\[
\begin{tikzcd}
{\on{Fun}(B\mathbb{N},\on{LinCat}_{\on{Mod}_R})} \arrow[rd, "\on{colim}"'] \arrow[rr, "\simeq"] &                          & {\on{Fun}(L,\on{LinCat}_{\on{Mod}_R})} \arrow[ld, "\on{colim}"] \\
                                                                                                & \on{LinCat}_{\on{Mod}_R} &                                                                
\end{tikzcd}
\]
\item[(2)] Pulling back along the inclusion $B\mathbb{N}\subset B\mathbb{Z}$ yields a commutative diagram of $\infty$-categories,
\[
\begin{tikzcd}
{\on{Fun}(B\mathbb{Z},\on{LinCat}_{\on{Mod}_R})} \arrow[rd, "\on{colim}"'] \arrow[rr, hook] &                          & {\on{Fun}(B\mathbb{N},\on{LinCat}_{\on{Mod}_R})} \arrow[ld, "\on{colim}"] \\
                                                                                            & \on{LinCat}_{\on{Mod}_R} &                                                                          
\end{tikzcd}
\]
where the horizontal functor is fully faithful. Its essential image consists of those functors $B\mathbb{N}\to \on{LinCat}_{\on{Mod}_R}$ mapping $1$ to an equivalence. 
\end{enumerate}
In particular, the orbit $\infty$-category $\C/F$ of a $\on{Mod}_R$-linear equivalence $F\colon \C\to \C$ is equivalent to the colimit over a functor $B\mathbb{Z}\to \on{LinCat}_{\on{Mod}_R}$, mapping $1$ to $F$. 
\end{lemma}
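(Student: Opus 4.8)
The three assertions are best handled in order, the combinatorial core being the inner anodyne claim of part~(1); part~(2) and the concluding statement then follow formally.

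For part~(1), I would show that $i\colon L\hookrightarrow B\mathbb{N}$ is inner anodyne by writing $B\mathbb{N}$ as a transfinite composite of pushouts of inner horn inclusions over $L$. The non-degenerate $n$-simplices of $B\mathbb{N}$ are the tuples $(a_1,\dots,a_n)$ with all $a_i\ge 1$; writing $N=\sum_i a_i$ for the \emph{weight}, the subobject $L$ is exactly the simplices of weight $\le 1$. I would pair the remaining simplices in Morse-theoretic fashion: call $(a_1,\dots,a_n)$ a \emph{top cell} if $a_1=1$ and $n\ge 2$, and a \emph{face} if $a_1\ge 2$, and pair a top cell $\sigma=(1,a_2,\dots,a_n)$ with $d_1\sigma=(1+a_2,a_3,\dots,a_n)$. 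This is a bijection between top cells and faces that exhausts all simplices of weight $\ge 2$. Well-ordering the top cells lexicographically by $(\text{weight},\dim)$, I would add them one at a time, each step being a pushout of the inner horn $\Lambda^n_1\hookrightarrow\Delta^n$ (with $n=\dim\sigma\ge 2$) which simultaneously adjoins $\sigma$ and its partner $d_1\sigma$. That this is a genuine inner horn filling rests on the observation that every facet $d_j\sigma$ with $j\ne 1$ is already present: $d_0\sigma$ and $d_n\sigma$ have strictly smaller weight, while each $d_j\sigma$ with $1<j<n$ is again a top cell, of the same weight but of dimension $n-1$, hence added earlier.

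The main obstacle is the bookkeeping at this step, because the structure map $\Delta^n\xrightarrow{\sigma}B\mathbb{N}$ is not injective on faces (already $d_0$ and $d_n$ of $(1,1)$ agree). One must check that all the identifications occur among the lower-weight facets, which are resolved in the previously built subobject, so that the pushout of $\Lambda^n_1\hookrightarrow\Delta^n$ along the attaching map really adjoins only the two new non-degenerate simplices $\sigma$ and $d_1\sigma$. Since monomorphisms are stable under pushout and all faces of $\sigma$ other than $d_1\sigma$, as well as all faces of $d_1\sigma$, lie in the previous stage, this is routine once the pairing is set up; closure of inner anodyne maps under pushout and countable transfinite composition then finishes part~(1). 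The commuting colimit triangle follows because $i$, being inner anodyne, is a categorical equivalence: restriction $\on{Fun}(B\mathbb{N},\mathcal{E})\to\on{Fun}(L,\mathcal{E})$ is then an equivalence with inverse $\on{Lan}_i$, and transitivity of left Kan extension along $L\to B\mathbb{N}\to\ast$ identifies $\on{colim}_L\circ\on{res}$ with $\on{colim}_{B\mathbb{N}}$.

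For part~(2), I would identify $j\colon B\mathbb{N}\hookrightarrow B\mathbb{Z}$ with the $\infty$-categorical localization of $B\mathbb{N}$ at the edge $1$. As every element of $\mathbb{N}$ is a sum of copies of $1$, inverting $1$ inverts all morphisms, so this localization is the $\infty$-groupoid completion $|B\mathbb{N}|$; group completion of the monoid $\mathbb{N}$ identifies this with $B\mathbb{Z}$, the comparison map being $j$ (see \cite{Cis}). The universal property of localizations then shows $\on{res}\colon \on{Fun}(B\mathbb{Z},\mathcal{E})\to\on{Fun}(B\mathbb{N},\mathcal{E})$ is fully faithful with essential image the functors inverting $1$, which---since $1$ generates $\mathbb{Z}$---is the stated condition. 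For the colimit triangle I would pass to left adjoints in the identity $\delta_{B\mathbb{N}}\simeq\on{res}\circ\delta_{B\mathbb{Z}}$ of constant-diagram functors, giving $\on{colim}_{B\mathbb{N}}\simeq\on{colim}_{B\mathbb{Z}}\circ\on{Lan}_j$; fully faithfulness of $\on{res}$ makes the counit $\on{Lan}_j\circ\on{res}\Rightarrow\on{id}$ an equivalence, so the two colimits agree on the essential image. Finally, the concluding statement is immediate: for a $\on{Mod}_R$-linear equivalence $F$, the defining diagram $L\to\on{LinCat}_{\on{Mod}_R}$ sends $1$ to an equivalence, so by part~(1) its colimit $\C/F$ equals the colimit over $B\mathbb{N}$ of its unique extension, which by part~(2) lies in the image of $\on{res}$ and has the same colimit as a functor $B\mathbb{Z}\to\on{LinCat}_{\on{Mod}_R}$ sending $1$ to $F$.
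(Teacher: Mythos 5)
Your proposal is correct and establishes the same two pillars as the paper --- that $L\subset B\mathbb{N}$ is inner anodyne and that $B\mathbb{N}\subset B\mathbb{Z}$ is the localization at $1$ --- but by genuinely different means. For part (1), the paper disposes of the inner anodyne claim by citing \cite[4.1.2.3]{HTT}, applied to the map of spans from $\Delta^1\leftarrow\{0,1\}\rightarrow\ast$ to $\Delta^{\mathbb{N}}\leftarrow\mathbb{N}\rightarrow\ast$; you instead build the filtration by hand, and your discrete Morse pairing is sound: pairing the top cell $\sigma=(1,a_2,\dots,a_n)$ with $d_1\sigma=(1+a_2,a_3,\dots,a_n)$ is a weight-preserving bijection onto the simplices with first entry $\geq 2$, the facets $d_j\sigma$ with $j\neq 1$ lie in strictly earlier stages of your $(\mathrm{weight},\dim)$-ordering ($d_0\sigma,d_n\sigma$ have lower weight, the inner facets equal weight but lower dimension), $d_1\sigma$ cannot appear before $\sigma$ since $\sigma$ is its unique partner, and $n\geq 2$ keeps the horn $\Lambda^n_1$ inner --- so you get a self-contained combinatorial proof at the cost of the bookkeeping you describe. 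For the two colimit triangles, you replace the paper's cofinality arguments (\cite[Prop.~4.1.1.3]{HTT} for (1), and \cite[Cor.~4.1.2.6]{HTT} for (2), using that $B\mathbb{Z}$ is Kan) by Kan-extension transitivity and the counit equivalence $\on{Lan}_j\circ\on{res}\simeq\on{id}$; these mechanisms are equivalent and equally rigorous. For part (2), the paper identifies the localization by transporting the problem to $L$, forming the explicit pushout $L\amalg_{\Delta^1}Q$ as in the proof of \cite[\href{https://kerodon.net/tag/01N4}{Prop.~01N4}]{Ker}, and comparing geometric realizations with $S^1$, whereas you argue that inverting $1$ inverts everything, so the localization is the groupoid completion of $B\mathbb{N}$, identified with $B\mathbb{Z}$ ``by group completion.'' That last identification is the one step you should shore up: for a general monoid $M$ the map $BM\to BM^{\mathrm{gp}}$ is \emph{not} a weak homotopy equivalence (it is only a $\pi_1$-isomorphism; by McDuff, $|BM|$ can be any connected homotopy type), so group completion is not a formal step here. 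For $\mathbb{N}$ it is classical, and your own part (1) gives the cleanest argument: $L\subset B\mathbb{N}$ is inner anodyne, hence a weak homotopy equivalence, $|L|\simeq S^1$, and the composite $L\to B\mathbb{Z}$ realizes to the map $S^1\to K(\mathbb{Z},1)$ classifying the generator; two-out-of-three then makes $j$ a weak homotopy equivalence into a Kan complex, exhibiting $B\mathbb{Z}$ as the groupoid completion --- which is in substance the same realization comparison the paper performs. With that point made explicit, your proof is complete, and in part (1) it is more elementary and self-contained than the paper's citation-based argument.
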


\begin{proof}
We begin with showing part (1). That the inclusion $L\subset B\mathbb{N}$ is inner anodyne follows from applying \cite[4.1.2.3]{HTT} to the diagram
\[
\begin{tikzcd}
\Delta^1 \arrow[d] & {\{0,1\}} \arrow[l] \arrow[d] \arrow[r] & \ast \arrow[d] \\
\Delta^\mathbb{N}  & \mathbb{N} \arrow[l] \arrow[r]          & \ast          
\end{tikzcd}
\]
where the central $\mathbb{N}$ denotes the discrete simplicial set and $\Delta^{\mathbb{N}}$ denotes the nerve of the poset $\mathbb{N}$.
The inclusion is thus final and cofinal by \cite[Prop.~4.1.1.3]{HTT}, showing the commutativity of the diagram in (1). The horizontal functor is an equivalence by 
\cite[\href{https://kerodon.net/tag/01EJ}{Corollary 01EJ}]{Ker} and \cite[\href{https://kerodon.net/tag/01EF}{Proposition 01EF}]{Ker}

We next show part (2). The inclusion $B\mathbb{N}\subset B\mathbb{Z}$ is a weak homotopy equivalence and, using that $B\mathbb{Z}$ is a Kan complex, hence final and cofinal \cite[Cor.~4.1.2.6]{HTT}. This shows the commutativity of the diagram in (2). The fully faithfulness of the horizontal functor amounts to the statement that $B\mathbb{Z}$ is the localization of $B\mathbb{N}$ at the morphism $1$. This can be readily shown, for instance as follows. The localization of $B\mathbb{N}$ at $1$ is equivalent to the localization of $L$ at $1$, since $L\subset B\mathbb{N}$ is a categorical equivalence. This localization can be obtained as the pushout of simplicial sets $L[\{1\}^{-1}]=L\amalg_{\Delta^1}Q$, where $Q$ is a contractible Kan complex with two nondegenerate $0$-simplices, see the proof of \cite[\href{https://kerodon.net/tag/01N4}{Proposition 01N4}]{Ker}. We observe that $L[\{1\}^{-1}]$ is a Kan complex and equivalent to $B\mathbb{Z}$, since the geometric realizations of both are equivalent to the circle $S^1$.
\end{proof}

\subsection{Orbit dg-categories}\label{subsec:orbitdg}

Given a dg-category $A$, we denote by $\on{rep}_{\on{dg}}(A,A)$ the dg-category of cofibrant dg $A$-$A$-bimodules which are right quasi-representable, see also \cite{Kel06,FKQ24} for detailed definitions. We note that any such bimodule $F\in \on{rep}_{\on{dg}}(A,A)$ gives rise to a dg-functor $\on{Perf}(F)\colon \on{Perf}(A)\to \on{Perf}(A)$, and thus to a $\on{Mod}_k$-linear functor $\D(F)\colon \D(A)\to \D(A)$ between the derived $\infty$-categories. We call $F\in \on{rep}_{\on{dg}}(A,A)$ a Morita equivalence if the dg-functor $\on{Perf}(F)$ is a quasi-equivalence (or equivalently $\D(F)$ is an equivalence of $\infty$-categories).

Given $F\in \on{rep}_{\on{dg}}(A,A)$, \cite{FKQ24} defines the dg-orbit category $A/F^{\mathbb{Z}}$ as a dg-localization of the 'left lax quotient dg-category' $A/_{ll}F^{\mathbb{N}}$, which is the dg-category with the same objects as $A$ and morphism complexes $\on{Map}_{A/_{ll}F^{\mathbb{N}}}(X,Y)= \bigoplus_{i\in \mathbb{N}}\on{Map}_{A}(X,F^i(Y))$. 

\begin{remark}\label{rem:orbitHoms}
Supposing that $F\in \on{rep}_{\on{dg}}(A,A)$ is a Morita equivalence, one can show that, for all $X,Y\in A$, there is a quasi-isomorphism
\begin{equation}\label{eq:quism}\on{Map}_{A/F^{\mathbb{Z}}}(X,Y)\simeq \bigoplus_{i\in \mathbb{Z}} \on{Map}_{\on{Perf}(A)}(X,F^i(Y))\,.\end{equation}
\end{remark}

\begin{proposition}\label{prop:orbitdgcat}
Let $F\in \on{rep}_{\on{dg}}(A,\A)$ be a Morita equivalence. Then there exists a canonical equivalence of $\on{Mod}_k$-linear $\infty$-categories
\[
\D(A)/\D(F) \simeq \D(A/F^{\mathbb{Z}})
\]
between the $\on{Mod}_k$-linear orbit $\infty$-category and the derived $\infty$-category of the dg-orbit category.
\end{proposition}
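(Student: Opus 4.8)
The plan is to follow the strategy of the proof of \Cref{prop:skewgroupcatiscolim} as closely as possible, replacing the role of the skew group dg-category by the dg-orbit category and absorbing the non-strictness of $F$ through \Cref{lem:cofinal}. First I would observe that, since $F$ is a Morita equivalence, $\D(F)$ is a $\on{Mod}_k$-linear equivalence, so by \Cref{lem:cofinal} the orbit $\infty$-category $\D(A)/\D(F)$ (defined as a colimit over $L$) is equivalent to the group quotient $\on{colim}_{B\mathbb{Z}} \rho_{\D(A)}$, where $\rho_{\D(A)}\colon B\mathbb{Z}\to \on{LinCat}_{\on{Mod}_k}$ sends the generator $1$ to $\D(F)$. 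This reduction is the crucial move, as it brings \Cref{lem:monad} into play with $G=\mathbb{Z}$: the colimit cone functor $P\colon \D(A)\to \D(A)/\D(F)$ admits a monadic right adjoint $P^R$ with $P^RP\simeq \coprod_{n\in\mathbb{Z}}\D(F)^n$.

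Next I would produce the comparison functor $\zeta\colon \D(A)/\D(F)\to \D(A/F^{\mathbb{Z}})$ via the universal property of the colimit. Since $\D(A)/\D(F)$ is the colimit of the $L$-diagram, it suffices to exhibit $\D(A/F^{\mathbb{Z}})$ as the tip of an $L$-cocone, i.e.\ to give a functor together with a single trivializing homotopy. The dg-category $A/F^{\mathbb{Z}}$ has the same objects as $A$ and carries a canonical dg-functor $\iota\colon A\to A/F^{\mathbb{Z}}$ (the inclusion of the $i=0$ summand on morphism complexes), while the defining dg-localization of \cite{FKQ24} identifies $F$ with the identity, providing a natural equivalence $\D(\iota)\circ \D(F)\simeq \D(\iota)$. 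This pair $(\D(\iota),\, \D(\iota)\circ\D(F)\simeq \D(\iota))$ is exactly the required $L$-cocone datum, and by \Cref{lem:cofinal} it promotes to a cocone under $\rho_{\D(A)}$, yielding $\zeta$ with $\zeta\circ P\simeq \D(\iota)$.

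Then the heart of the argument is a comparison of morphism objects between generators. For $X,Y\in A$, on the colimit side the adjunction $P\dashv P^R$ together with the compactness of the representable $X\in \D(A)$ gives
\[
\on{Mor}_{\D(A)/\D(F)}(P(X),P(Y))\simeq \on{Mor}_{\D(A)}\big(X,\textstyle\coprod_{n\in\mathbb{Z}}\D(F)^n(Y)\big)\simeq \textstyle\coprod_{n\in\mathbb{Z}}\on{Map}_{\on{Perf}(A)}(X,F^n(Y)),
\]
while on the orbit side \Cref{rem:orbitHoms} yields $\on{Mor}_{\D(A/F^{\mathbb{Z}})}(\D(\iota)(X),\D(\iota)(Y))\simeq \bigoplus_{i\in\mathbb{Z}}\on{Map}_{\on{Perf}(A)}(X,F^i(Y))$. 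These agree term by term, and I would verify that $\zeta$ realizes the identification summand-wise: on the $n=0$ part it restricts to the equivalence induced by $\iota$ (from $\zeta\circ P\simeq \D(\iota)$), and on the remaining summands it is pinned down by matching the equivalences $P(X)\simeq P(F^nX)$ coming from the colimit cocone with the canonical equivalences $\iota(X)\simeq \iota(F^nX)$ in the orbit category, these together generating all morphisms. Since $\zeta$ preserves colimits, is fully faithful on the compact generators $P(X)$, and sends them to the compact generators of $\D(A/F^{\mathbb{Z}})$, both fully faithfulness and essential surjectivity of $\zeta$ follow by pulling the colimits out of the Hom's and by compact generation, exactly as in \Cref{prop:skewgroupcatiscolim}.

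The main obstacle I anticipate is the cocone construction in the second step: because $F$ is a non-strict action given by a bimodule rather than an honest dg-endofunctor, the trivializing homotopy $\D(\iota)\circ\D(F)\simeq \D(\iota)$ cannot be copied verbatim from the strict skew-group setting. I would resolve this by leaning on the localization description of $A/F^{\mathbb{Z}}$ in \cite{FKQ24} together with \Cref{lem:cofinal}: the dg-orbit category is defined by inverting precisely the morphisms whose inversion corresponds to the passage from $B\mathbb{N}$ to $B\mathbb{Z}$, so the coherence is supplied by the universal property of localization rather than built by hand. An alternative that sidesteps the coherence issue is to strictify, replacing $(A,F)$ by a Morita-equivalent dg-category carrying a strict $\mathbb{Z}$-action by a dg-automorphism inducing $\D(F)$, for which the orbit category agrees with the skew group dg-category $A\ast \mathbb{Z}$ and \Cref{prop:skewgroupcatiscolim} applies directly.
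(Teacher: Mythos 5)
Your proposal is correct and follows essentially the same route as the paper: exhibit $\D(A/F^{\mathbb{Z}})$ as the tip of an $L$-cocone via the canonical functor from $A$ and the trivializing equivalence supplied by the dg-localization of \cite{FKQ24}, then verify the induced comparison functor is an equivalence by matching morphism objects between compact generators (via the monadic adjunction giving $\coprod_{n\in\mathbb{Z}}\D(F)^n$ on one side and \Cref{rem:orbitHoms} on the other), exactly the ``analogous argument'' to \Cref{prop:skewgroupcatiscolim} that the paper invokes; your explicit reduction through \Cref{lem:cofinal} and \Cref{lem:monad} just spells out what the paper leaves implicit. Only your final strictification alternative is doubtful --- a Morita equivalence need not be realizable as a strictly invertible dg-automorphism --- but your main argument does not depend on it.
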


\begin{remark}
We expect that $\D(A)/^{\on{lax}}\D(F)$ is equivalent to the derived $\infty$-category of the left lax quotient dg-category $A/_{ll}F^{\mathbb{N}}$. We further expect \Cref{prop:orbitdgcat} to also hold true if $F$ is not a Morita equivalence.
\end{remark}

\begin{proof}[Proof of \Cref{prop:orbitdgcat}]
As shown in \cite{FKQ24}, there exists a dg-functor $\pi\colon A\to A/F^\mathbb{Z}$ together with a natural equivalence $\pi\circ F\simeq F$. Passing to compact, cofibrant modules, $F$ induces a dg-functor $\on{Perf}(F)\colon \on{Perf}(A)\to \on{Perf}(A)$ (and not just a dg-bimodule) together with a natural equivalence $\on{Perf}(\pi)\circ \on{Perf}(F)\simeq \on{Perf}(F)$. This exhibits $\on{Perf}(A/F^{\mathbb{Z}})$ as the tip of a cocone under the functor $L\to \on{dgCat}_k[M^{-1}],\ 1\mapsto \on{Perf}(F)$. Passing to derived $\infty$-categories, this induces a $\on{Mod}_k$-linear functor $\D(A)/\D(F) \to \D(A/F^{\mathbb{Z}})$, which is checked to be an equivalence using an analogous argument as in the proof of \Cref{prop:skewgroupcatiscolim}.
\end{proof}

\begin{remark}
Suppose that $F\colon A\to A$ is a dg-functor. An a priori different definition of orbit dg-category is given in \cite{Kel05}, which we denote by $A/^{\on{dg}}F$. There is an apparent diagram of dg-categories
\[
\begin{tikzcd}
A \arrow[rr, "F"] \arrow[rd] & {} \arrow[d, Rightarrow] & A \arrow[ld] \\
                              & A/^{\on{dg}}F           &              
\end{tikzcd}
\]
where the natural transformation evaluates at $X\in A$ to the canonical morphism $F(X)\to X$ in $A/^{\on{dg}}F$. Using the universal property shown in \cite{FKQ24}, this induces a dg-functor $A/_{ll}F^{\mathbb{N}}\to A/^{\on{dg}}F$, which in turn induces a dg-functor $\alpha\colon A/F^\mathbb{Z}\to A/^{\on{dg}}F$. If $F$ is a Morita equivalence, the quasi-isomorphisms \eqref{eq:quism} imply that $\alpha$ is a a quasi-equivalence. 

In the case that $F$ is strictly invertible, the orbit dg-category $A/^{\on{dg}}F$ is furthermore isomorphic to the skew group dg-category $A\ast \mathbb{Z}$ from \Cref{sec:skewgrpdgcat}.
\end{remark}

\subsection{Example: Periodic derived categories}\label{subsec:periodicex}

Let $\on{Perf}(k)$ be the dg-category of finite dimensional chain complexes over a field $k$. Consider the invertible endofunctor $[n]\colon \on{Perf}(k)\to \on{Perf}(k)$ given by the shift functor. This defines a $\mathbb{Z}$-action on $\on{Perf}(k)$, with orbit dg-category $\on{Perf}(k)/[n]$.

Let $k[t_n]$ be the dg-algebra of graded polynomials with the monomial $t_n$ in degree $n$ (in homological grading). Note that $k[t_n]$ is the $(n+1)$-Calabi--Yau completion of $k$ in the sense of \cite{Kel11}. We similarly denote by $k[t_n^\pm]$ the dg-algebra of graded Laurent polynomials. 

\begin{proposition}\label{prop:periodicorbit}
There are equivalences in $\on{LinCat}_{\on{Mod}_k}$
\[
\D(\on{Perf}(k)/[n])\simeq \D(k)/[n] \simeq \D(k[t_n^\pm])\,.
\]
\end{proposition}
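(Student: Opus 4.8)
The plan is to establish the two equivalences separately, reading them from left to right, and to lean on the orbit-category machinery already developed in \Cref{subsec:inftyorbitcats} and \Cref{subsec:orbitdg}.

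For the first equivalence $\D(\on{Perf}(k)/[n])\simeq \D(k)/[n]$, I would invoke \Cref{prop:orbitdgcat} directly. The shift functor $[n]\colon \on{Perf}(k)\to \on{Perf}(k)$ is a dg-functor which is strictly invertible, hence in particular it gives rise to a bimodule $F\in \on{rep}_{\on{dg}}(\on{Perf}(k),\on{Perf}(k))$ which is a Morita equivalence in the sense of that subsection. Applying \Cref{prop:orbitdgcat} to $A=\on{Perf}(k)$ and this $F$ yields
\[
\D(\on{Perf}(k)/[n]^{\mathbb{Z}})\simeq \D(\on{Perf}(k))/\D([n])\,.
\]
Since $\D(\on{Perf}(k))\simeq \D(k)$ and the induced endofunctor $\D([n])$ is the shift, the right-hand side is precisely $\D(k)/[n]$, giving the first equivalence. (Here I am writing $\on{Perf}(k)/[n]$ for the dg-orbit category $\on{Perf}(k)/[n]^{\mathbb{Z}}$.)

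For the second equivalence $\D(k)/[n]\simeq \D(k[t_n^\pm])$, I would compute the orbit $\infty$-category on the right via its description as a colimit over $B\mathbb{Z}$. By \Cref{lem:cofinal}, since the shift $[n]\colon \D(k)\to \D(k)$ is an equivalence, the orbit category $\D(k)/[n]$ is the colimit over $B\mathbb{Z}$ of the functor sending the generator $1$ to $[n]$; equivalently it is the $\infty$-categorical group quotient $\D(k)_{\mathbb{Z}}$ for the $\mathbb{Z}$-action generated by $[n]$. I would then identify this quotient with $\D(k[t_n^\pm])$ by computing endomorphisms of the image of the generator $k\in \D(k)$ under the colimit functor $F\colon \D(k)\to \D(k)_{\mathbb{Z}}$. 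Using that $F$ is monadic with monad $\coprod_{i\in\mathbb{Z}}[ni]$ (as in \Cref{lem:monad}(2), transported to the $\mathbb{Z}$-orbit setting via \Cref{lem:cofinal}), and that $k\in\D(k)$ is compact, the endomorphism object of $F(k)$ is
\[
\on{Mor}_{\D(k)_{\mathbb{Z}}}(F(k),F(k))\simeq \coprod_{i\in\mathbb{Z}}\on{Mor}_{\D(k)}(k,k[ni])\simeq \bigoplus_{i\in\mathbb{Z}}k[ni]\,,
\]
which is exactly the graded Laurent algebra $k[t_n^\pm]$ with $t_n$ in degree $n$. Since $F(k)$ compactly generates $\D(k)_{\mathbb{Z}}$ (the single generator $k$ generates $\D(k)$ under colimits and $F$ preserves colimits and compact objects), the functor classifying $F(k)$ furnishes the desired equivalence $\D(k)/[n]\simeq \on{RMod}_{k[t_n^\pm]}\simeq \D(k[t_n^\pm])$.

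The main obstacle I anticipate is organizing the second step so that the algebra structure, and not merely the underlying graded module, is matched correctly: one must verify that the $A_\infty$- (or dg-) structure on the endomorphism complex $\on{Mor}_{\D(k)_{\mathbb{Z}}}(F(k),F(k))$ is formal and agrees with the strictly-commutative Laurent algebra $k[t_n^\pm]$, rather than some a priori curved or deformed variant. This is where I would either appeal to the skew group dg-algebra model $k\ast\mathbb{Z}$ of \Cref{sec:skewgrpdgcat} (noting that for the strictly invertible shift, the orbit dg-category coincides with the skew group dg-category, so its morphism complexes compute the honest Laurent polynomials with the evident multiplication) or directly check that the multiplication induced by composition of the generating degree-$n$ self-equivalence is the free strictly-invertible one, forcing the identification with $k[t_n^\pm]$.
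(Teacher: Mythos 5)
Your handling of the first equivalence coincides with the paper's: both simply apply \Cref{prop:orbitdgcat} to the strictly invertible shift on $\on{Perf}(k)$. For the second equivalence you take a genuinely different route. The paper argues from left to right: it builds a functor $\D(k)/[n]\to \D(k[t_n^\pm])$ out of the universal property of the colimit, using the observation that a $\on{Mod}_k$-linear functor out of $\D(k)$ is determined by the image of $k$ together with the equivalence $k[t_n^\pm][n]\simeq k[t_n^\pm]$ --- a trick chosen explicitly ``to avoid any discussions about signs for the shift functor'' --- and then checks that the induced functor is fully faithful and essentially surjective. You never construct a comparison functor by hand: you identify $\D(k)/[n]$ with the $B\mathbb{Z}$-colimit via \Cref{lem:cofinal}, compute the endomorphism object of the generator $F(k)$ using the monad $\coprod_{i\in\mathbb{Z}}[ni]$ from \Cref{lem:monad}, and conclude by recognition of module $\infty$-categories over a compact generator; this is legitimate (it is exactly the strategy of \Cref{prop:skewgroupcatiscolim} and \Cref{thm:colimBG}, and your generation argument for $F(k)$ via monadicity is fine), and you correctly isolate the one real issue, namely that the graded computation pins down only the underlying object and not the $\mathbb{E}_1$-structure. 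Your first proposed fix does close that gap: since $[n]$ is strictly invertible, the remark at the end of \Cref{subsec:orbitdg} identifies the orbit dg-category with the skew group dg-category $\on{Perf}(k)\ast\mathbb{Z}$, so \Cref{prop:skewgroupcatiscolim} exhibits the colimit as $\D(\on{Perf}(k)\ast\mathbb{Z})$, whose generator $\tilde{k}$ has the strict endomorphism dg-algebra $\bigoplus_{i\in\mathbb{Z}}\on{Map}_{\on{Perf}(k)}(k,k[ni])$. One caveat: ``honest Laurent polynomials with the evident multiplication'' is slightly too quick, since the shift may act on morphism complexes with signs, so composition is a priori a twisted product $t^i\cdot t^j=\varepsilon(i,j)\,t^{i+j}$; associativity makes $\varepsilon$ a $2$-cocycle on $\mathbb{Z}$ with values in $k^\times$, which is a coboundary because $\mathbb{Z}$ has cohomological dimension $1$, so after rescaling one indeed gets $k[t_n^\pm]$. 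This sign bookkeeping is precisely what the paper's cocone trick was designed to bypass; your approach pays that small cost but in exchange produces the compact generator and its endomorphism algebra explicitly rather than verifying an equivalence after the fact.
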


\begin{proof}
The first equivalence is the statement of \Cref{prop:orbitdgcat}. For the second equivalence, we produce a functor $\D(k)/[n] \to \D(k[t_n^\pm])$ using the universal property of the colimit. We employ the following trick, to avoid any discussions about signs for the shift functor: any $\on{Mod}_k$-linear functor $\D(k)\to \D(k[t_n^\pm])$ is fully determined by the image of $k$. Since $k[t_n^\pm][n]\simeq k[t_n^\pm]\in  \D(k[t_n^\pm])$, we thus find a commutative diagram in $\on{LinCat}_{\on{Mod}_k}$:
\[
\begin{tikzcd}
\D(k) \arrow[rd, "{k\mapsto k[t_n^\pm][n]}"'] \arrow[rr, "{[n]}"] &                  & \D(k) \arrow[ld, "{k\mapsto k[t_n^\pm]}"] \\
                                                               & {\D(k[t_n^\pm])} &                                             
\end{tikzcd}
\]
This induces a functor $\D(k)/[n] \to \D(k[t_n^\pm])$, which is now readily checked to be fully faithful and essentially surjective.
\end{proof}

\begin{remark}
Let $\D^{\on{fin}}(k[t_n])\subset \D^{\on{perf}}(k[t_n])$ denote the full subcategory of modules with finite dimensional total homology. There is an equivalence in $\on{LinCat}_{\on{Mod}_k}$
\[
\D(k[t_n])/\on{Ind}\D^{\on{fin}}(k[t_n])\simeq \D(k[t_n^\pm])\,,
\]
see \cite[Lem.~2.11]{Chr22b}. By \cite[Rem.~2.14.(2)]{HI22}, there is an equivalence of dg-categories  
\[ \on{Perf}(k)/[n]\simeq \on{Perf}(k[t_n])/\on{Perf}(k[t_n])^{\on{fin}}\,,\] 
where $\on{Perf}(k[t_n])^{\on{fin}}$ denotes the dg-category subcategory of $\on{Perf}(k[t_n])$ of dg-modules with finite dimensional total homology. Combining these equivalences provides an alternative way to prove \Cref{prop:periodicorbit}
\end{remark}

\begin{lemma}\label{lem:orbitvstensoring}
Let $\C$ be a $\on{Mod}_k$-linear $\infty$-category. Then the orbit category $\C/[n]$ is equivalent to the tensor product $\C\otimes_{\on{Mod}_k} \D(k[t_n^\pm])$ in $\on{LinCat}_{\on{Mod}_k}$.
\end{lemma}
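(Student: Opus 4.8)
The plan is to exhibit the orbit category $\C/[n]$ as the image of the orbit category $\D(k)/[n]$ under the colimit-preserving functor $(\mhyphen)\otimes_{\on{Mod}_k}\C$, and then to invoke \Cref{prop:periodicorbit}. First I would recall that $\C/[n]$ is by definition the colimit of the functor $\rho_\C\colon L\to\on{LinCat}_{\on{Mod}_k}$ sending $\ast\mapsto\C$ and the nondegenerate $1$-simplex to the shift $[n]_\C$. Writing $\Phi\coloneqq(\mhyphen)\otimes_{\on{Mod}_k}\C\colon\on{LinCat}_{\on{Mod}_k}\to\on{LinCat}_{\on{Mod}_k}$, I note that $\Phi$ preserves all small colimits: since $\on{LinCat}_{\on{Mod}_k}$ is symmetric monoidal closed, $\Phi$ is a left adjoint (see \cite{HA}).

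The central step is to identify the diagram $\rho_\C$ with $\Phi\circ\rho_{\D(k)}$ in $\on{Fun}(L,\on{LinCat}_{\on{Mod}_k})$, where $\rho_{\D(k)}$ is the analogous diagram for the case $\C=\D(k)$. Because $L$ is freely generated by a single endo-$1$-simplex, a functor out of $L$ is no more than the datum of an object together with a self-equivalence, so it suffices to produce a compatible equivalence $\Phi(\D(k))\simeq\C$ under which $\Phi([n]_{\D(k)})$ corresponds to $[n]_\C$. The first is the unit equivalence $\D(k)\otimes_{\on{Mod}_k}\C\simeq\C$. For the second I would use that on any stable $\on{Mod}_k$-linear $\infty$-category the shift $[n]$ is the action of the invertible object $k[n]\in\on{Mod}_k$ under the $\on{Mod}_k$-module structure; hence $[n]_{\D(k)}=k[n]\otimes_{\on{Mod}_k}(\mhyphen)$, and $\Phi([n]_{\D(k)})=[n]_{\D(k)}\otimes_{\on{Mod}_k}\id_\C$ is exactly the $k[n]$-action on $\C\simeq\D(k)\otimes_{\on{Mod}_k}\C$, i.e.\ $[n]_\C$.

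Granting this identification, the result follows formally:
\begin{align*}
\C/[n]=\on{colim}_L\rho_\C &\simeq\on{colim}_L(\Phi\circ\rho_{\D(k)})\simeq\Phi\big(\on{colim}_L\rho_{\D(k)}\big)\\
&=\Phi(\D(k)/[n])=(\D(k)/[n])\otimes_{\on{Mod}_k}\C\,,
\end{align*}
where the middle equivalence is the colimit-preservation of $\Phi$. Applying \Cref{prop:periodicorbit} to replace $\D(k)/[n]$ by $\D(k[t_n^\pm])$ and commuting the tensor factors then yields $\C/[n]\simeq\C\otimes_{\on{Mod}_k}\D(k[t_n^\pm])$.

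The main obstacle I anticipate lies in the second paragraph: upgrading the equivalence of diagrams $\rho_\C\simeq\Phi\circ\rho_{\D(k)}$ from an objectwise statement to a genuinely coherent one in $\on{Fun}(L,\on{LinCat}_{\on{Mod}_k})$. The reduction to a single endofunctor via the freeness of $L$ handles the coherence of the loop, but one still needs the clean assertion that the shift functor on a $\on{Mod}_k$-linear $\infty$-category coincides with the $k[n]$-action and that this is preserved by $(\mhyphen)\otimes_{\on{Mod}_k}\C$; I would isolate this as a short auxiliary lemma or cite it from \cite{HA}. Everything else is formal manipulation of colimits and the previously established \Cref{prop:periodicorbit}.
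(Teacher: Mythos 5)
Your proposal is correct and is essentially the paper's own argument: the paper likewise invokes \Cref{prop:periodicorbit} to identify $\D(k[t_n^\pm])$ with $\on{colim}_L\on{Mod}_k=\D(k)/[n]$ and then uses that $\C\otimes_{\on{Mod}_k}(\mhyphen)$ preserves colimits to pull the colimit out, obtaining $\on{colim}_L\C=\C/[n]$. The only difference is expository: you make explicit the coherence point that tensoring the diagram $(\on{Mod}_k,[n])$ with $\C$ yields $(\C,[n]_\C)$, via the identification of the shift with the $k[n]$-action, whereas the paper leaves this implicit in the phrase ``the tensor product preserves colimits in the second entry.''
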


\begin{proof}
Using that $\D(k)\simeq \on{Mod}_k$ and that the tensor product in $\on{LinCat}_{\on{Mod}_k}$ preserves colimits in the second entry, we find equivalences
\[\C\otimes_{\on{Mod}_k} \D(k[t_n^\pm])\simeq \C\otimes_{\on{Mod}_k} \on{colim}_{L}\on{Mod}_k\simeq \on{colim}_{L}\C =\C/[n]\,.\]
\end{proof}

The above allows us to obtain the following concrete description of the $1$-periodic topological Fukaya category, or equivalently Higgs category, of a marked surface studied in \cite{Chr22b}. Note that the $2$-Calabi--Yau cluster category of a marked surface is a localization of this Higgs category \cite{Wu21}, and the following result thus establishes a precise relationship between derived categories of gentle algebras and the cluster categories of surfaces. 

\begin{proposition}\label{prop:Higgs}
Let $A$ be a finite dimensional gentle algebra and ${\bf S}$ the corresponding marked surface in the sense of \cite{OPS18}. Assume that every boundary component of ${\bf S}$ contains at least one marked point\footnote{This is equivalent to the assertion that $A$ is smooth, which follows from \cite[Lem.~3.1.3]{LP20}.}. Then the Higgs category $\mathcal{H}$ \cite{Wu21} associated with ${\bf S}$, described in \cite{Chr22b}, is equivalent to the perfect derived $\infty$-category of the orbit dg-category $\on{Perf}(A)/[1]$. 

In particular, the $1$-categorical orbit category $(\on{ho}\on{Perf}(A))/[1]$ embeds fully faithfully into the triangulated homotopy $1$-category $\on{ho}\mathcal{H}$ of the Higgs category.
\end{proposition}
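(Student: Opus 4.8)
The plan is to reduce the statement to the orbit-vs-tensoring identity \Cref{lem:orbitvstensoring}, treating the geometric input of \cite{Chr22b} as a black box. I first recall the gentle-algebra/surface dictionary: under the standing hypothesis that every boundary component of $\mathbf{S}$ carries a marked point (equivalently, $A$ is smooth), the big derived category $\D(A)$ is equivalent to the topological Fukaya category $\mathcal{F}(\mathbf{S})$ of the surface, with $\on{Perf}(A)$ recovered as its subcategory of compact objects. The second ingredient is the main comparison of \cite{Chr22b}, which --- again under the marked-point hypothesis --- identifies the Higgs category $\mathcal{H}$ with the $1$-periodic topological Fukaya category of $\mathbf{S}$; by construction the latter is the $\D(k[t_1^\pm])$-linear base change $\mathcal{F}(\mathbf{S})\otimes_{\on{Mod}_k}\D(k[t_1^\pm])$ of the ordinary topological Fukaya category.

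Granting these two inputs, the proof becomes a short chain of equivalences in $\on{LinCat}_{\on{Mod}_k}$. Applying \Cref{lem:orbitvstensoring} with $n=1$ to $\C=\D(A)\simeq\mathcal{F}(\mathbf{S})$, and then \Cref{prop:orbitdgcat} (with $F=[1]$, which is a Morita equivalence as an autoequivalence of $\on{Perf}(A)$), gives
\[
\mathcal{H}\simeq \mathcal{F}(\mathbf{S})\otimes_{\on{Mod}_k}\D(k[t_1^\pm])\simeq \D(A)/[1]\simeq \D(\on{Perf}(A)/[1])\,.
\]
Restricting on both sides to compact objects then passes from this big version to the perfect derived $\infty$-category asserted in the statement, using that $\mathcal{H}$ is idempotent-complete and that the equivalence preserves compact objects.

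I expect the main obstacle to be the precise matching with \cite{Chr22b}: one must check that the cosheaf on the ribbon graph of $\mathbf{S}$ whose global sections define $\mathcal{H}$ is genuinely the $\D(k[t_1^\pm])$-linear base change of the one defining $\mathcal{F}(\mathbf{S})$, rather than a differently twisted $1$-periodic gluing. Since $\mathcal{F}(\mathbf{S})$ is a colimit of its local pieces, the functor $(\mhyphen)\otimes_{\on{Mod}_k}\D(k[t_1^\pm])$ automatically distributes over the gluing, so no dualizability input is needed here; the role of the smoothness hypothesis is entirely contained in the validity of the inputs from \cite{Chr22b} recalled above, namely that $A$ is smooth and that the Higgs category agrees with the $1$-periodic Fukaya category.

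For the final assertion, I would identify $\on{ho}\mathcal{H}\simeq \on{ho}(\on{Perf}(A)/[1])$, the homotopy category of the dg-orbit category. By \Cref{rem:orbitHoms}, for $X,Y\in A$ the morphism complex in $\on{Perf}(A)/[1]$ is $\bigoplus_{i\in\mathbb{Z}}\on{Map}_{\on{Perf}(A)}(X,Y[i])$; taking $H^0$ and using that cohomology commutes with direct sums yields $\bigoplus_{i\in\mathbb{Z}}\on{Hom}_{\on{ho}\on{Perf}(A)}(X,Y[i])$, which is exactly the morphism space of the $1$-categorical orbit category $(\on{ho}\on{Perf}(A))/[1]$. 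Hence the canonical functor $(\on{ho}\on{Perf}(A))/[1]\to \on{ho}\mathcal{H}$ is fully faithful, as claimed.
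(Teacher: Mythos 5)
Your argument is correct and follows essentially the same route as the paper's own proof: the paper likewise invokes \cite{Chr22b} (Prop.~4.16 and Thm.~8.4) for the identification $\on{Ind}\mathcal{H}\simeq \D(A)\otimes_{\on{Mod}_k}\D(k[t_1^\pm])$ and then reduces to \Cref{lem:orbitvstensoring} (combined with \Cref{prop:orbitdgcat}), with your Fukaya-category intermediary being exactly what those citations encode; note only that in your displayed chain the left-hand side should be $\on{Ind}\mathcal{H}$ rather than $\mathcal{H}$, which you do repair when restricting to compact objects. Your explicit check of the final fully faithful embedding via \Cref{rem:orbitHoms} is a detail the paper leaves implicit, and it is correct.
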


\begin{proof}[Proof of \Cref{prop:Higgs}.]
It is shown in \cite{Chr22b} that the $\on{Ind}$-completion of the $\infty$-categorical Higgs category $\mathcal{H}$ is equivalent to $\D(A)\otimes_{\on{Mod}_k} \D(k[t_1^\pm])$, see Proposition 4.16 and Theorem 8.4 in loc.~cit.. The statement thus reduces to \Cref{lem:orbitvstensoring}.
\end{proof}

\section{Spectral skew group algebras}\label{sec:skewgroupalg}

In this final section, we introduce a spectral version of the skew group algebra and generalize \Cref{prop:skewgroupcatiscolim} to the spectral setting.

We fix a base $\mathbb{E}_\infty$-ring spectrum $R$. Consider a $\on{Mod}_R$-linear $\infty$-category $\C$ with a $G$-action. In the case that $\C\simeq \on{RMod}_A$ has a compact generator $A$, the functor $\on{Mor}_{\C}(A,\mhyphen)\colon \C\to \on{Mod}_R$ is monadic, and hence so is the composite $\C_G\to \C\to \on{Mod}_R$. The image of $R$ under the left adjoint $\on{Mod}_R\to \C_G$ defines a compact generator $Y$ of $\C_G$. In the case that $A$ is fixed by the $G$-action, the $R$-linear endomorphism algebra of $Y$ can be considered as the corresponding skew group algebra $AG$. To be able to describe the spectral skew group algebra $AG$, we give a more systematic construction of it in \Cref{def:skewgroupspectrum}.

\begin{construction}
Let $\underline{\on{Mod}_R}\in \on{Fun}(BG,\on{LinCat}_{\on{Mod}_R})$ denote the constant functor with value $\on{Mod}_R$. We define the functor $\xi$ as the following composite:

\begin{align*}
\on{Fun}(BG,\left(\on{LinCat}_{\on{Mod}_R}\right)_{\on{Mod}_R/}) \rightarrow & \on{Fun}(BG,\on{LinCat}_{\on{Mod}_R})_{\underline{\on{Mod}_R}/}\\
 \rightarrow & \on{Fun}(BG,\on{LinCat}_{\on{Mod}_R})_{\on{Mod}_R^{\amalg G}/}\\
 \simeq &\left(\on{LinCat}_{\on{Mod}_R^{\amalg G}}\right)_{\on{Mod}_R^{\amalg G}/}\,.
\end{align*}
The $G$ action on $\on{Mod}_R^{\amalg G}$ is given by permuting the $G$-components, so that $\on{colim}_{BG} \on{Mod}_R^{\amalg G}\simeq \on{Mod}_R\in \on{LinCat}_{\on{Mod}_R}$. This equivalence is adjoint to a morphism $\on{Mod}_R^{\amalg G}\to \underline{\on{Mod}_R}$ in $\on{Fun}(BG,\on{LinCat}_{\on{Mod}_R})$ which is used for the second functor above. The third functor uses the equivalence from \Cref{prop:Gaction}.
\end{construction}

\begin{proposition}\label{prop:algBG}
There is a commutative diagram of $\infty$-categories
\[
\begin{tikzcd}[column sep=30]
{\on{Fun}(BG,\on{Alg}(\on{Mod}_R))} \arrow[r, hook, "{\on{Fun}(BG,\Theta_*)}"'] \arrow[d] & {\on{Fun}(BG,\left(\on{LinCat}_{\on{Mod}_R}\right)_{\on{Mod}_R/})} \arrow[d, "\xi"] \arrow[r] & {\on{Fun}(BG,\on{LinCat}_{\on{Mod}_R})} \arrow[d, "\simeq"] \\
{\on{Alg}(\on{Mod}_R^{\amalg G})} \arrow[r, hook, "\Theta_*"] & \left(\on{LinCat}_{\on{Mod}_R^{\amalg G}}\right)_{\on{Mod}_R^{\amalg G}/} \arrow[r]           & \on{LinCat}_{\on{Mod}_R^{\amalg G}}                        
\end{tikzcd}
\]
where the right vertical functor is the equivalence from \Cref{prop:Gaction}. We denote the left vertical functor by
\[ \xi^{\on{Alg}}\colon {\on{Fun}(BG,\on{Alg}(\on{Mod}_R))}\to {\on{Alg}(\on{Mod}_R^{\amalg G})}\,.\]
\end{proposition}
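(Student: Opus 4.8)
\textbf{Proof proposal for Proposition~\ref{prop:algBG}.}

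The plan is to construct the claimed square by verifying that the composite $\xi \circ \on{Fun}(BG,\Theta_*)$ factors through the essential image of the bottom horizontal functor $\Theta_*\colon \on{Alg}(\on{Mod}_R^{\amalg G})\hookrightarrow (\on{LinCat}_{\on{Mod}_R^{\amalg G}})_{\on{Mod}_R^{\amalg G}/}$, and then to define $\xi^{\on{Alg}}$ as the resulting corestriction. The key input is that both horizontal functors labelled $\Theta_*$ are fully faithful with characterized essential image: by \cite[Thm.~4.8.5.5]{HA} the functor $\Theta_*$ is fully faithful, and by \cite[Prop.~4.8.5.8]{HA} its essential image consists of those objects $\C\in (\on{LinCat}_{\C_0})_{\C_0/}$ for which the structure functor $\C_0\to \C$ admits a suitable right adjoint exhibiting $\C$ as modules over an algebra. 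So the entire content of the proposition is the verification that $\xi$ carries the image of $\on{Fun}(BG,\Theta_*)$ into the image of $\Theta_*$.

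First I would make the right-hand square commute, which is essentially formal: the rightmost vertical equivalence is the one from \Cref{prop:Gaction}, and $\xi$ is built precisely as a lift of the constructions defining that equivalence, so the top and bottom horizontal forgetful functors $(\on{LinCat}_{(\mhyphen)})_{(\mhyphen)/}\to \on{LinCat}_{(\mhyphen)}$ intertwine them by construction. Concretely, $\xi$ is a composite of three functors, the last being the equivalence of \Cref{prop:Gaction}, and each of the first two is a slice-category functor over the forgetful map to the underlying $\infty$-category; forgetting the distinguished object therefore commutes with $\xi$ up to the equivalence of \Cref{prop:Gaction}, giving the right square. The left square, defining $\xi^{\on{Alg}}$, is then obtained by showing the image lands in the correct full subcategory.

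The main obstacle is this factorization through the essential image of $\Theta_*$. Here I would argue as follows. Take $A_\bullet\in \on{Fun}(BG,\on{Alg}(\on{Mod}_R))$, so that $\on{Fun}(BG,\Theta_*)(A_\bullet)$ is the $BG$-diagram $\on{RMod}_{A_\bullet}(\on{Mod}_R)$ equipped with its structure maps from $\underline{\on{Mod}_R}$. Applying $\xi$ and using \Cref{prop:Gaction}, the resulting $\on{Mod}_R^{\amalg G}$-linear $\infty$-category is by \Cref{lem:colimastensorproduct} the relative tensor product, and its distinguished object is the image of the monoidal unit. By \cite[Prop.~4.8.5.8]{HA}, to see this lies in the image of $\Theta_*$ it suffices to check that the structure functor $\on{Mod}_R^{\amalg G}\to \xi(\on{RMod}_{A_\bullet})$ preserves colimits and admits a colimit-preserving, conservative right adjoint whose existence is guaranteed by local rigidity of $\on{Mod}_R^{\amalg G}$ (\Cref{lem:locrigid}) together with the fact that the corresponding property holds pointwise over $BG$, where each $\on{Mod}_R\to \on{RMod}_{A_g}$ is the free-module functor and manifestly of this form. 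The relevant monad is then the skew group algebra $A_\bullet G$, whose underlying object is $A^{\amalg G}$ by the computation that $\psi^R\psi\otimes(\mhyphen)$ has underlying endofunctor $\coprod_{g\in G}\rho(g)$ as in \Cref{lem:monad}(2). Thus $\xi(\on{Fun}(BG,\Theta_*)(A_\bullet))$ is an object of the essential image of $\Theta_*$, which produces the desired algebra object in $\on{Alg}(\on{Mod}_R^{\amalg G})$ and hence the functor $\xi^{\on{Alg}}$ making the left square commute.
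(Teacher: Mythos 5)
Your overall plan coincides with the paper's: the right square is formal from the construction of $\xi$, and the content is to check, via the characterization of the essential image of $\Theta_*$ in \cite[Prop.~4.8.5.8]{HA}, that $\xi$ carries the image of $\on{Fun}(BG,\Theta_*)$ into the image of $\Theta_*$. However, there is a genuine error in your identification of what $\xi$ outputs, and it derails the verification. You claim that applying $\xi$ to the diagram $\on{RMod}_{A_\bullet}$ yields, via \Cref{lem:colimastensorproduct}, the relative tensor product $\on{Mod}_R\otimes_{\on{Mod}_R^{\amalg G}}\C$ with distinguished object the image of the unit, and that the relevant monad is the skew group algebra $AG$ with underlying object $A^{\amalg G}$. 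But $\xi$ does not form the colimit over $BG$: inspecting its three-step construction, it leaves the underlying $\infty$-category unchanged. Its output on $(\on{Mod}_R\to\on{RMod}_A)$ is the category $\on{RMod}_A$ itself, regarded as a $\on{Mod}_R^{\amalg G}$-linear category via the equivalence of \Cref{prop:Gaction}, equipped with the structure functor $F\colon\on{Mod}_R^{\amalg G}\to\on{RMod}_A$ that is componentwise the free-module functor $(\mhyphen)\otimes A$ (obtained by precomposing with $\on{Mod}_R^{\amalg G}\to\underline{\on{Mod}_R}$). Consequently the algebra that \Cref{prop:algBG} produces is the $\on{Mod}_R^{\amalg G}$-linear endomorphism algebra $\tilde{A}\in\on{Alg}(\on{Mod}_R^{\amalg G})$ of $A$, \emph{not} $AG\in\on{Alg}(\on{Mod}_R)$; the skew group algebra only arises afterwards by applying $\on{Alg}(\psi)$ (\Cref{def:skewgroupspectrum}), and the colimit $\on{RMod}_{AG}$ only in the proof of \Cref{thm:colimBG} by applying $\psi_!$. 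If $\xi$ already took the quotient, the factorization $\on{colim}_{BG}\simeq\psi_!\circ\xi$ underlying that proof would make no sense, and the bottom row of the diagram, which lives over $\on{LinCat}_{\on{Mod}_R^{\amalg G}}$ rather than $\on{LinCat}_{\on{Mod}_R}$, would not match your description.

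Once the target is corrected, the checks must be run on $F\colon\on{Mod}_R^{\amalg G}\to\on{RMod}_A$: here conditions (1)--(3) and (5) of \cite[Prop.~4.8.5.8]{HA} are immediate, condition (4) follows because the right adjoint $F^R$ is $G$-componentwise $\on{Mor}_{\on{RMod}_A}(A,\mhyphen)$ and hence preserves geometric realizations, and condition (6) amounts to the compatibility of $F^R$ and the counit of $F\dashv F^R$ with the $\on{Mod}_R^{\amalg G}$-action --- a point your proposal only gestures at via local rigidity and ``pointwise over $BG$'', which does not by itself address the equivariance of the counit. The paper also uses \cite[Prop.~4.8.5.8]{HA} in a preliminary step you omit: characterizing the image of $\on{Fun}(BG,\Theta_*)$ as those pointed diagrams whose distinguished object is a compact generator preserved by the $G$-action (with condition (6) automatic there since $\on{Mod}_R$ is generated by $R$). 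Your appeal to \Cref{lem:monad}(2) computes the monad of the quotient functor $\C_G\to\C$, which is relevant to \Cref{thm:colimBG} but not to this proposition.
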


\begin{proof}
The commutativity of the right diagram follows from the definition of $\xi$. Using the fully faithfulness of the left horizontal morphisms, we can induce the left vertical functor from $\xi$ by proving that elements in the image of $\on{Fun}(BG,\Theta_*)$ are mapped by $\xi$ to elements in the image of $\Theta_*$. 

Using \cite[Prop.~4.8.5.8]{HA}, we find that a functor $BG\to \left(\on{LinCat}_{\on{Mod}_R}\right)_{\on{Mod}_R/}, \ast\mapsto (\on{Mod}_R\to \C)$ factors through $\on{Alg}(\on{Mod}_R)$ if and only if the image of $R\in \on{Mod}_R$ in $\C$, denoted $Y\in \C$, is a compact generator preserved by the $G$-action. Note that condition (6) in Proposition 4.8.5.8 is automatically fulfilled, since $\on{Mod}_R$ is generated by $R$, compare also with the proof of \cite[Thm.~7.1.2.1]{HA}.

Let $\on{Mod}_R\to \C,~R\mapsto Y$ be as before, with $Y$ a compact generator. We denote $A=\on{End}_{\C}(Y)\in \on{Alg}(\on{Mod}_R)$, and have $\C\simeq\on{RMod}_A$. The functor $\on{Mod}_R\to \C\simeq \on{RMod}_A$ is given by $(\mhyphen)\otimes A$. Its image under $\xi$ is equivalent to the functor $F\colon \on{Mod}_R^{\amalg G}\to \on{RMod}_A$, componentwise given by $(\mhyphen)\otimes A$, i.e.~satisfying $F(R_g)\simeq A$ for all $g\in G$\footnote{
We point out the curious fact that the functors $F$ and $F^R$ only depend on the $\on{Mod}_R$-linear structure of $\C$, and not on its $\on{Mod}_R^{\amalg G}$-linear structure, meaning the $G$-action on $\C$. This is because $A$ is fixed by the $G$-action on $\C$. The $G$-action will only contribute when describing the $\on{Mod}_R^{\amalg G}$-linear endomorphism algebra structure of $F^R(A)$, see \Cref{rem:skewgroupspectrum} below.}.

We again apply \cite[Prop.~4.8.5.8]{HA} to show that the functor $F:\on{Mod}_R^{\amalg G}\to \on{RMod}_A$ lies in the image of $\Theta_*$, thus showing the existence of the left commutative square. Conditions (1), (2), (3) and (5) are clear. For condition (4), it suffices to observe that the right adjoint $F^R$ of $F\colon \on{Mod}_R^{\amalg G}\to \on{RMod}_A$ is $G$-componentwise a functor that preserves geometric realizations, and thus preserves geometric realizations as well. Condition (6) boils down to the statement that $F^R$ and the counit of $F\dashv F^R$ commute with the $\on{Mod}_R^{\amalg G}$-action, which is straightforward to check. 
\end{proof}

\begin{definition}\label{def:skewgroupspectrum}
Consider an action $\rho\colon BG\to \on{Alg}(\on{Mod}_R)$ of the group $G$ on an $R$-linear ring spectrum $A$. The skew group algebra $AG\in \on{Alg}(\on{Mod}_R)$ is defined as the image of $\rho$ under the functors
\begin{equation}\label{eq:funskewgroupspectrum}
\on{Fun}(BG,\on{Alg}(\on{Mod}_R))\xlongrightarrow{\xi^{\on{Alg}}} {\on{Alg}(\on{Mod}_R^{\amalg G})}\xlongrightarrow{\on{Alg}(\psi)} \on{Alg}(\on{Mod}_R)
\end{equation}
from \Cref{rem:psi} and \Cref{prop:algBG}. 
\end{definition}

\begin{remark}\label{rem:skewgroupspectrum}
We unravel the construction of the skew group algebra $AG$. 

Starting with an $R$-linear ring spectrum $A$, with the group $G$ acting on it, we have an induced $G$-action on its $\infty$-category of (right) modules $\on{RMod}_A$, which we can also consider as a left-tensoring of $\on{RMod}_A$ by $\on{Mod}_R^{\amalg G}$. Considering $A$ as an object of $\on{RMod}_A$, we consider the $\on{Mod}_R^{\amalg G}$-linear endomorphism algebra $\tilde{A}\in \on{Alg}(\on{Mod}_R^{\amalg G})$ of $A$, in the sense of \cite[Section 4.7.1]{HA}. The algebra $\tilde{A}$ is a refinement of the skew group algebra $AG$, which is defined as the image of $\tilde{A}$ under the monoidal functor $\psi=\coprod \colon \on{Mod}_R^{\amalg G} \to \on{Mod}_R$. It remains to unravel its definition to describe the skew group algebra more explicitly. 

Given $g\in G$, the restriction to the corresponding component $\on{Mod}_R\subset \on{Mod}_R^{\amalg G}$ of $\tilde{A}$ is given by $A$. Inspecting the definition, one sees that this amounts to the fact that $A$ is preserved by the action of $g$. We write this decomposition as $\tilde{A}=\coprod_{g\in G} \tilde{A}_g$. Note that we thus have $AG\simeq \coprod_{g\in G} A$. 

We turn to the algebra structure of $\tilde{A}$ (describing it on the level of homotopy groups). To make the notation for this more transparent, we denote the $\on{Mod}_R^{\amalg G}$-action by $\otimes^G$. Firstly, we consider $\tilde{A}$ as equipped with the map $m\colon \tilde{A}\otimes^G A\to A$ in $\on{RMod}_A$, that is simply the multiplication map of $A$ on every component $A\otimes A\simeq \tilde{A}_g\otimes^G A$. The crucial point of the construction of $\tilde{A}$ in \cite[Section 4.7.1]{HA} is that the multiplication map $\tilde{m}\colon \tilde{A}\otimes^G \tilde{A}\to \tilde{A}$ in $\on{Mod}_R^{\amalg G}$ is uniquely determined by the property that the following diagram commutes\footnote{This follows from the fact that $m\colon \tilde{A}\otimes^G A\to A$ is a terminal object in the monoidal $\infty$-category $\on{Mod}_R^{\amalg G}[A]$ described in \cite[Section 4.7.1]{HA}.}:
\[
\begin{tikzcd}
\tilde{A}\otimes^G\tilde{A}\otimes^G A \arrow[d, "\tilde{m}\otimes^G A", dashed] \arrow[r, "\tilde{A}\otimes^G m"] & \tilde{A}\otimes^G A \arrow[d, "m"] \\
\tilde{A}\otimes^G A \arrow[r, "m"]                                                                          & A                                
\end{tikzcd}
\]
We write elements of the homotopy groups of $\tilde{A}$ as pairs $(g,a)$ with $g\in G$ and $a\in A$. We next argue that the morphism 
\[ \tilde{A}\otimes^G m \colon \tilde{A}_g \otimes^G \tilde{A} \otimes^G A\to \tilde{A}_g\otimes^G A\] 
is given on elements by
\[ 
(g,a)\otimes (h,b)\otimes c \mapsto (g,a)\otimes \left( (g.b)\cdot c\right)\,.
\]
The above identity follows from the following three observations:
\begin{itemize}
\item Every element $(h,b)\in \tilde{A}$ determines an inclusion $A \hookrightarrow \tilde{A}\otimes^G A$, mapping $c$ to $(h,b)\otimes c$, in $\on{RMod}_A$.
\item For every $(h,b)\in \tilde{A}$, the following diagram commutes:
\[
\begin{tikzcd}
A \arrow[d, "{c\mapsto (h,b)\otimes c}"', hook] \arrow[rd, "b\cdot (\mhyphen)"] &   \\
\tilde{A}\otimes^G A \arrow[r, "m"]                                           & A
\end{tikzcd}
\]
\item For every $b\in A$ the tensor product $R_g\otimes^G (\mhyphen)$ with the object $R_g=R$ lying in the $g$-component of $\on{Mod}_R^{\amalg G}$ maps the endomorphism $A\xrightarrow{b\cdot (\mhyphen)} A$ to the endomorphism $A\xrightarrow{g.b\cdot (\mhyphen)} A$, where $g.$ refers to the $G$-action on $A$. 
\end{itemize}
With the above, it is now straightforward to see that $\tilde{m}$ is given by 
\[
\tilde{m}\colon \tilde{A}\otimes \tilde{A} \to \tilde{A},\quad (g,a)\otimes (h,b)\mapsto (gh,a\cdot (g.b))\,,
\]
matching the desired formula of the multiplication of the skew group algebra. 

In case that $A$ is a discrete algebra object of $\on{Mod}_k$ for a commutative ring $k$, we thus see that the skew group algebra $AG$ is thus quasi-isomorphic to the classical skew group algebra.
\end{remark}

Finally, we show that the module $\infty$-category of $AG$ describes the group quotient.

\begin{proof}[Proof of \Cref{thm:colimBG}.]
Let $\tilde{A}\in \on{Alg}(\on{Mod}_R^{\amalg G})$ be the image of $A$ under the functor $\xi^{\on{Alg}}$. We have equivalences
\begin{align*}
\on{colim}_{BG}\on{RMod}_A&\simeq \psi_!(\on{RMod}_A)\\
& \simeq \psi_!( \on{RMod}_{\tilde{A}}(\on{Mod}_R^{\amalg G}))\\
& \simeq \on{RMod}_{\psi(\tilde{A})}=\on{RMod}_{AG}\,,
\end{align*}
where the first equivalence follows from \Cref{lem:colimastensorproduct}, the second from the commutativity of the left square in \Cref{prop:algBG}, and the third equivalence follows from the statement in \cite[Prop.~4.8.5.1]{HA} that the functor $\theta$ preserves coCartesian edges, see also the last paragraph in the proof of that statement.
\end{proof}

\begin{remark}
If $A$ is a dg-algebra with a strict $G$-action, we can deduce from combining \Cref{thm:colimBG} and \Cref{prop:skewgroupcatiscolim} that the dg-categorical skew group algebra $A\ast G$ is quasi-isomorphic to $AG$.
\end{remark}

\begin{remark}\label{rem:monoids}
Contrary to the results of the previous sections, the construction of the skew group algebra and the proof of \Cref{thm:colimBG} do not use that every element of $G$ has an inverse. \Cref{thm:colimBG} thus immediately generalizes to the case that $G$ is a monoid in sets. We also note that a small amount of additional work allows a generalization to the setting where $G$ is a monoid in spaces.
\end{remark}

\bibliography{biblio} 
\bibliographystyle{alpha}

\textsc{Mathematisches Institut, Universität Bonn, Endenicher Allee 60, 53115 Bonn,
Germany.}

\textit{Email address:} \texttt{christ@math.uni-bonn.de}

\end{document}